\newtheorem{theorem}{Theorem}
\newtheorem{lemma}{Lemma}
\newtheorem{proposition}{Proposition}
\newtheorem{definition}{Definition}
\newcommand{\ad}{\,\mathrm{ad}\,}
\newcommand{\GL}{\,\mathrm{GL}\,}
\newcommand{\SL}{\,\mathrm{SL}\,}
\newcommand{\diag}{\,\mathrm{diag}\,}
\begin{document}

\begin{center}
{\Large {\bf Automorphisms and isomorphisms of Chevalley groups of type $G_2$\\

\bigskip

 over local rings with $1/2$ and $1/3$\footnote{The
work is supported by the Russian President grant MK-2530.2008.1 and
by the grant of Russian Fond of Basic Research 08-01-00693.} }}

\bigskip
\bigskip

{\large \bf E.~I.~Bunina}
\bigskip

{ \bf M.V. Lomonosov Moscow State University}

\bigskip

{ Russia, 119992, Moscow, Leninskie Gory, Main Building of MSU, Faculty of Mechanics and Mathematics, Department of Higher Algebra}

\bigskip

{ email address: helenbunina@yandex.ru}

\end{center}
\bigskip

\begin{center}

{\bf Abstract.}

\end{center}

We prove that every isomorphism of Chevalley groups of type $G_2$ over commutative local rings  with~$1/2$ and $1/3$ is standard, i.\,e., it is a composition of  a ring isomorphism and an inner automorphism.
\medskip

{\bf Key words:} Chevalley groups, local rings, isomorphisms and automorphisms

\bigskip

\section*{Introduction}\leavevmode

An associative commutative ring $R$ with a unit is called  \emph{local},
if it contains exactly one maximal ideal (that coincides with the radical of~$R$). Equivalently, the set of all non-invertible
elements of~$R$ is an ideal.

We describe automorphisms of Chevalley groups of type $G_2$ over local rings with~$1/2$ and $1/3$. Note that for the root system $G_2$ there exists only one weight lattice, that is simultaneously universal and adjoint, therefore for every ring~$R$ there exists a unique Chevalley group of type $G_2$, that is
 $G(R)=G_{\ad}(G_2,R)$. Over local rings universal Chevalley groups coincide with their elementary subgroups, consequently the Chevalley group $G(R)$ is also an elementary Chevalley group.

Theorem~1 for the root systems $A_l, D_l, $ and $E_l$ was obtained by the author in~\cite{ravnyekorni}, in~\cite{normalizers} all automorphisms of Chevalley groups of given types over local rings with~$1/2$ were described. Theorem~1 for the root systems $B_2$ and $G_2$ is obtained in the paper~\cite{korni2}, but we repeat it here for the root system~$G_2$ with an easier proof.

Similar results for Chevalley groups over fields were proved
 by R.\,Steinberg~\cite{Stb1} for the finite case and by J.\,Humphreys~\cite{H} for the infinite case. Many papers were devoted
to description of automorphisms of Chevalley groups over different
commutative rings, we can mention here the papers of
Borel--Tits~\cite{v22}, Carter--Chen~Yu~\cite{v24}, 
Chen~Yu~\cite{v25}--\cite{v29}, A.\,Klyachko~\cite{Klyachko}.
 E.\,Abe~\cite{Abe_OSN} proved that all automorphisms of Chevalley groups under Noetherian  rings with~$1/2$ are standard.

The case
$A_l$ was completely studied by the papers of
W.C.~Waterhouse~\cite{v46}, V.M.~Petechuk~\cite{v12},  Fuan Li and
Zunxian Li~\cite{v37}, and also for rings without~$1/2$. The paper
of I.Z.\,Golubchik and A.V.~Mikhalev~\cite{v8} covers the
case~$C_l$, that is not considered in the present paper. Automorphisms and isomorphisms of general linear groups over arbitrary associative rings were described by E.I.~Zelmanov in~\cite{v11} and by I.Z.~Golubchik, A.V.~Mikhalev in~\cite{GolMikh1}.

 We generalize some methods of V.M.~Petechuk~\cite{Petechuk1} to prove Theorem~1.

The author is thankful to N.A.\,Vavilov,  A.A.\,Klyachko,
A.V.\,Mikhalev for valuable advices, remarks and discussions.

\section{Definitions and main theorems.}\leavevmode

 We fix the root system~$\Phi$ of the type $G_2$ (detailed texts about root
systems and their properties can be found in the books
\cite{Hamfris}, \cite{Burbaki}). Let $e_1,e_2,e_3$
be an orthonorm basis of the space $\mathbb R^3$. Then we  numerate the roots of $G_2$
as follows:
$$
\alpha_1= e_1-e_2, \alpha_2=-2e_1+e_2+e_3
$$
are simple roots;
\begin{align*}
\alpha_3=\alpha_1+\alpha_2= e_3-e_1,\\
\alpha_4= 2\alpha_1+\alpha_2=e_3-e_2,\\
\alpha_5= 3\alpha_1+\alpha_2=e_1+e_3-2e_2,\\
\alpha_6=3\alpha_1+2\alpha_2=2e_3-e_1-e_2
\end{align*}
are other positive roots.

Suppose now that we have a
semisimple complex Lie algebra~$\mathcal L$ of type $G_2$ with
Cartan subalgebra~$\mathcal H$ (detailed information about
semisimple Lie algebras can be found in the book~\cite{Hamfris}).

 Then in the algebra $\mathcal L$ we can choose a \emph{Chevalley basis}
  $\{ h_i\mid i=1,2; x_\alpha\mid \alpha\in \Phi\}$ so that for
every two elements of this basis their commutator is an integral
linear combination of the elements of the same basis.

Namely,

1) $[h_i,h_j]=0;$

2) $[h_i,x_\alpha]=\langle \alpha_i,\alpha\rangle x_\alpha$;

3) if $\alpha=n_1\alpha_1+\dots+n_4\alpha_4$, then
$[x_{\alpha},x_{-\alpha}]=n_1h_1+\dots+n_4h_4$;

4) if $\alpha+\beta\notin \Phi$, then $[x_{\alpha},x_{\beta}]=0$;

5) if $\alpha+\beta\in \Phi$, and $\alpha,\beta$ are roots of the same length, then $[x_\alpha,x_\beta]=c x_{\alpha+\beta}$;

6) if $\alpha+\beta\in \Phi$, $\alpha$ is a long root, $\beta$
is a short root, then $[x_{\alpha},x_\beta]=a x_{\alpha+\beta}+b
x_{\alpha+2\beta}+\dots$.

Take now an arbitrary local ring with $1/2$ and $1/3$ and construct
an elementary adjoint Chevalley group of type $G_2$
over this ring (see, for example~\cite{Steinberg}). For our convenience we briefly put here the construction.

In the Chevalley basis of $\mathcal L$ all operators
$(x_\alpha)^k/k!$ for $k\in \mathbb N$ are written
as integral (nilpotent) matrices. An integral matrix also can be considered as
a matrix over an arbitrary commutative ring with~$1$.Let $R$ be
such a ring. Consider matrices $n\times n$ over~$R$, matrices $(x_\alpha)^k/k!$ for
 $\alpha\in \Phi$, $k\in \mathbb N$ are included in $M_n(R)$.

Now consider automorphisms of the free module $R^n$ of the form
$$
\exp (tx_\alpha)=x_\alpha(t)=1+tx_\alpha+t^2 (x_\alpha)^2/2+\dots+
t^k (x_\alpha)^k/k!+\dots
$$
Since all matrices $x_\alpha$ are nilpotent, we have that this
series is finite. Automorphisms $x_\alpha(t)$ are called
\emph{elementary root elements}. The subgroup in $Aut(R^n)$,
generated by all $x_\alpha(t)$, $\alpha\in \Phi$, $t\in R$, is
called an \emph{elementary adjoint Chevalley group} (notation:
$E_{\ad}(\Phi,R)=E_{\ad}(R)$).

In an elementary Chevalley group there are the following important elements:

--- $w_\alpha(t)=x_\alpha(t) x_{-\alpha}(-t^{-1})x_\alpha(t)$, $\alpha\in \Phi$,
$t\in R^*$;

--- $h_\alpha (t) = w_\alpha(t) w_\alpha(1)^{-1}$.

The action of $x_\alpha(t)$ on the Chevalley basis is described in
\cite{v23}, \cite{VavPlotk1}, we write it below.

Over local rings for the root system $G_2$
all Chevalley groups coincide with elementary adjoint Chevalley groups $E_{\ad}(R)$,
 therefore we do not introduce Chevalley
groups themselves in this paper.

We will work with two types of standard automorphisms of  Chevalley groups
 $G(R)$ and $G(S)$ and with one unusual, ``temporary'' type of automorphisms.

{\bf Ring isomorphisms.} Let $\rho: R\to S$ be an isomorphism of rings.
The mapping $x\mapsto \rho (x)$ from $G(R)$ onto $G(S)$
is an isomorphism of the groups  $G(R)$ and $G(S)$, it is denoted by the same letter~$\rho$ and called a
  \emph{ring isomorphism} of the groups~$G(R)$ and~$G(S)$. Note that for all
$\alpha\in \Phi$ and $t\in R$ the element $x_\alpha(t)$ is mapped to $x_\alpha(\rho(t))$.

{\bf Inner automorphisms.} Let $g\in G(R)$ be an element of a Chevalley group under consideration.
Conjugation of the group $G(R)$ with the element~$g$ is an automorphism of  $G(R)$, that is denoted by~$i_g$
and is called an  \emph{inner automorphism} of~$G(R)$.

These two types of automorphisms are called \emph{standard}. There are  central and graph automorphisms, which are also standard, but in our case (root system $G_2$) they can not appear. Therefore we say that an isomorphism of groups $G(R)$ and $G(S)$ is standard, if it is a composition of two introduced types of isomorphisms.

Besides that, we need also to introduce  temporarily one more type of automorphisms:

{\bf Automorphisms--conjugations.} Let $V$ be a representation space of the Chevalley group $G(R)$, $C\in \GL(V)$ be  a matrix from the normalizer of $G(R)$:
$$
C G(R) C^{-1}= G (R).
$$
 Then the mapping $x\mapsto CxC^{-1}$ from $G(R)$ onto itself is an automorphism of the Chevalley group, which  is denoted by $i_С$ and is called an \emph{automorphism--conjugation} of~$G(R)$,
\emph{induced by the element}~$C$ of the group~$\GL(V)$.

\smallskip

In Section~5 we will prove that in our case all automorphisms--conjugations are inner, but the first step is the proof of the following theorem:

\begin{theorem}\label{first} Suppose that $G(R)=G(\Phi,R)$ and $G(S)=G(\Phi,S)$ are Chevalley groups of type  $G_2$, $R$, $S$ are commutative local rings with~$1/2$ and $1/3$. Then every isomorphism of the groups $G(R)$ and $G(S)$ is a composition of a ring isomorphism and an automorphism--conjugation.
\end{theorem}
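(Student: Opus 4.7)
Let $\varphi: G(R) \to G(S)$ be an arbitrary isomorphism. The strategy, generalizing Petechuk's framework for $A_l, D_l, E_l$ and the author's earlier work on $B_2, G_2$, is to successively normalize $\varphi$ by an automorphism-conjugation until what remains is a ring isomorphism applied root-subgroup-by-root-subgroup. The first task is to analyze the image of the standard split torus $T(R) \subset G(R)$. Exploiting the hypotheses $1/2, 1/3 \in R$, the torus $T(R)$ contains sufficiently many semisimple elements of small order (involutions $h_\alpha(-1)$, order-$3$ elements built from roots of unity or from $h_\alpha$) whose centralizers are computable, rigid Levi-type subgroups, and these centralizer computations pin down $\varphi(T(R))$. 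One then produces $C \in \GL(V)$ in the normalizer of $G(S)$ such that $C^{-1}\varphi(T(R))C = T(S)$, so that after replacing $\varphi$ by $i_{C^{-1}}\circ\varphi$ we may assume $\varphi(T(R)) = T(S)$.

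Next, because $T(S)$ acts on $G(S)$ with one-dimensional weight spaces indexed by $\Phi$, and $\varphi$ intertwines the two torus actions, each root subgroup $X_\alpha \subset G(R)$ must be sent to a one-parameter unipotent subgroup which is again a root subgroup $X_{\alpha'} \subset G(S)$. Since $\mathrm{Aut}(\Phi) = W(\Phi)$ for $G_2$, the induced map $\alpha \mapsto \alpha'$ is realized by a Weyl group element; a further inner automorphism reduces us to $\alpha' = \alpha$, so $\varphi(x_\alpha(t)) = x_\alpha(\rho_\alpha(t))$ for bijections $\rho_\alpha: R \to S$. Additivity of $\rho_\alpha$ is immediate from $x_\alpha(t)x_\alpha(u) = x_\alpha(t+u)$, while the Chevalley commutator formulas for $G_2$ — whose nonzero structure constants are $\pm 1, \pm 2, \pm 3$ — combined with the invertibility of $2$ and $3$ force all $\rho_\alpha$ to coincide on $R$ and to be multiplicative. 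This yields a ring isomorphism $\rho: R \to S$ such that $\rho^{-1}\circ\varphi$ fixes every generator $x_\alpha(t)$; since these generate $G(R)$, we conclude $\varphi = i_C \circ \rho$, the desired composition of an automorphism-conjugation and a ring isomorphism.

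The main obstacle is the first step — actually showing that $\varphi(T(R))$ is $\GL(V)$-conjugate to $T(S)$, and correspondingly that root subgroups go to (conjugates of) root subgroups. The standard device is to reduce modulo the maximal ideals, invoke the field case of Steinberg and Humphreys over the residue fields $R/\mathfrak{m}_R$ and $S/\mathfrak{m}_S$, and then lift the conclusion back to $R$ using that the congruence kernel $\ker(G(R) \to G(R/\mathfrak{m}_R))$ is intrinsically characterized (for example, as an appropriate maximal normal pro-nilpotent subgroup) and so is preserved by $\varphi$. The role of $1/3$ is specific to $G_2$: the commutator formulas for a long and a short root involve coefficients up to $3$, and without its invertibility one cannot cleanly separate the contributions of short and long roots when reconstructing the $\rho_\alpha$ from commutator identities.
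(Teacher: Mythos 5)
Your outline follows the right general strategy (reduce modulo the radical, invoke the field case, normalize by a conjugation, reconstruct a ring isomorphism from the root subgroups), but it leaves the decisive step unproved. The claim that one can find $C$ in the normalizer of $G(S)$ with $C^{-1}\varphi(T(R))C=T(S)$, and that root subgroups then go to root subgroups because of a weight--space decomposition, is exactly the hard part over a local ring, and neither the ``centralizer computations'' nor the ``standard device'' you invoke actually delivers it. Reducing modulo the maximal ideals and citing Steinberg--Humphreys only gives that $\varphi$ agrees with a standard isomorphism \emph{modulo the radical} (this is Proposition~1 of the paper); it does not lift by itself: knowing the image of the torus only up to congruence mod $J_S$ gives conjugacy of tori over the residue field, not in $\GL_{14}(S)$. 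Likewise the argument ``$T(S)$ acts with one-dimensional weight spaces, hence $X_\alpha\mapsto X_{\alpha'}$'' is a statement about the adjoint action over a field; over $S$ you have not yet controlled the position of $\varphi(T(R))$ inside $\GL_{14}(S)$, so there is no canonical eigenspace decomposition to intertwine. The paper's actual proof does not conjugate the whole torus at the outset: it normalizes only the two involutions $h_{\alpha_i}(-1)$ (via the idempotent/free-module arguments of Propositions~2--3, which is where locality of the ring is really used) and the Weyl elements $w_1,w_2$ (Lemma~1), and then pins down $\varphi(x_{\alpha_1}(1))$ and $\varphi(x_{\alpha_2}(1))$ by an explicit elimination in $100$ variables using the relations $Con1$--$Con8$; general torus elements $h_\alpha(t)$ are treated only afterwards. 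Some substitute for that computation, or a genuine rigidity theorem for tori over local rings, is needed; as written your first step is an assertion, not a proof.

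A second, smaller gap: even granting $\varphi(x_\alpha(t))=x_\alpha(\rho(t))$, you obtain a priori only an injective ring homomorphism, i.e.\ an isomorphism of $R$ onto a subring $S'\subseteq S$ with $CG(S)C^{-1}=G(S')$; you assert the maps $\rho_\alpha$ are bijections onto $S$ without argument. The paper needs a separate step (Lemmas~2 and~3: the Chevalley group generates $M_{14}(S)$ as a ring) to conclude $S'=S$ and hence that the conjugating matrix really is an automorphism--conjugation of $G(S)$.
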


Sections 2--4 are  devoted to the proof of Theorem~1.

\section{Changing the initial isomorphism to a special isomorphism.}\leavevmode

In this section we use some arguments from the paper~\cite{Petechuk1}.

\begin{definition} \emph{By $\GL_n(R,J)$ we denote the subgroup of such matrices~$A$ from $\GL_n(R)$, that satisfy $A-E\in
M_n(J)$, where $J$ is the radical of~$R$.} \end{definition}

\begin{proposition}\label{p1_0} By an arbitrary isomorphism $\varphi$ between Chevalley groups $G(R)$ and $G(S)$
we can construct an isomorphism $\varphi'= i_{g^{-1}} \varphi$, $g\in GL_n(S)$, of the group
 $G(R)\subset \GL_n(R)$ onto some subgroup $\GL_n(S)$,
with the property that any matrix $A\in G(R)$ with elements of the subring of~$R$, generated by~$1$,
is mapped under~$\varphi'$ to the matrix from~$A\cdot \GL_n(S,J_S)$.
\end{proposition}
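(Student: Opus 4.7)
The plan is to reduce modulo the Jacobson radical of $S$ and then invoke the classical rigidity of homomorphisms of Chevalley groups over fields. Write $J_S$ for the radical of $S$, $k_S=S/J_S$ for the residue field, and $\pi_S:\GL_n(S)\to \GL_n(k_S)$ for the entrywise reduction, so that $\ker\pi_S=\GL_n(S,J_S)$. Let $\mathbb{Z}_0\subseteq R$ be the subring generated by $1$; since $1/2,1/3\in R,S$, the canonical ring map $\mathbb{Z}_0\to S$ is well-defined, so every $A\in G(R)$ with entries in $\mathbb{Z}_0$ can be viewed simultaneously in $\GL_n(R)$ and in $\GL_n(S)$. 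The desired conclusion $\varphi'(A)\in A\cdot\GL_n(S,J_S)$ is then equivalent to $\pi_S(\varphi'(A))=\pi_S(A)$.

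First I would form the composite $\bar\varphi:=\pi_S\circ\varphi:G(R)\to G(k_S)$ (the image lands in $G(k_S)$ because $\pi_S$ sends each generator $x_\alpha(t)$, $t\in S$, to $x_\alpha(\bar t)$) and restrict it to $G(\mathbb{Z}_0)$. I would then compare $\bar\varphi|_{G(\mathbb{Z}_0)}$ with the natural inclusion $\iota:G(\mathbb{Z}_0)\hookrightarrow G(k_S)$ induced by $\mathbb{Z}_0\to k_S$: both homomorphisms are built from the same Chevalley structure constants on the generators $x_\alpha(n)$ and on the Weyl representatives $w_\alpha(\pm 1)$. Applying the Steinberg--Humphreys classification of abstract homomorphisms of Chevalley groups of type $G_2$ over a field, using that $1/2,1/3\in k_S$ makes $G_2(k_S)$ simple, that type $G_2$ has no graph automorphism, and that the prime subfield of $k_S$ admits no non-trivial ring automorphism, I conclude that there exists $\bar g\in \GL_n(k_S)$ with
\[
\bar\varphi(A)=\bar g\,\iota(A)\,\bar g^{-1}\qquad\text{for all }A\in G(\mathbb{Z}_0).
\]

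Finally I would lift $\bar g$ to $g\in \GL_n(S)$: any lift of an invertible matrix modulo $J_S$ is invertible, since $S$ is local, so $\pi_S$ is surjective on invertible matrices. Setting $\varphi':=i_{g^{-1}}\circ\varphi$, one computes $\pi_S(\varphi'(A))=\bar g^{-1}\bar\varphi(A)\bar g=\iota(A)=\pi_S(A)$ for any $A\in G(R)$ with entries in $\mathbb{Z}_0$, which is exactly the required property.

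The main obstacle will be the middle step. The restriction $\bar\varphi|_{G(\mathbb{Z}_0)}$ is an abstract group homomorphism whose image need not, a priori, be all of $G(k_S)$, and whose source and target are Chevalley groups over possibly different characteristics. One must identify unipotent elements on both sides, verify that the image contains enough root subgroups for the field-level rigidity theorem to apply, rule out graph and diagonal twists (trivially absent in type $G_2$), and force the residual ring-automorphism on the prime subfield to be the identity, so that the only surviving freedom is an inner automorphism of $\GL_n(k_S)$, which supplies the conjugating matrix $\bar g$.
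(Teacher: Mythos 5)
Your overall strategy --- reduce modulo $J_S$, show that on integral matrices the reduction of $\varphi$ agrees with conjugation by some $\bar g\in\GL_n(k_S)$, then lift $\bar g$ --- has the same shape as the paper's argument, but the way you try to produce $\bar g$ contains a genuine gap. You want to apply the Steinberg--Humphreys classification to the abstract homomorphism $\bar\varphi|_{G(\mathbb{Z}_0)}\colon G(\mathbb{Z}_0)\to G(k_S)$. That classification is a statement about automorphisms (and isomorphisms) of Chevalley groups \emph{over a field}; here the source $G(\mathbb{Z}_0)$ is a Chevalley group over $\mathbb{Z}$ (or $\mathbb{Z}/m$), the map is far from surjective when $k_S$ is infinite, and nothing in your argument shows that its image is conjugate to $\iota(G(\mathbb{Z}_0))$, that it contains or normalizes root subgroups of $G(k_S)$, or that it sends unipotents to unipotents. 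Your final paragraph correctly lists exactly these points as ``the main obstacle,'' but it does not resolve any of them, so the existence of $\bar g$ is asserted rather than proved; rigidity for abstract homomorphisms between groups over different rings (Borel--Tits type results) is precisely the hard part and is not available off the shelf in the form you need.

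The paper sidesteps this obstacle by reducing the \emph{source} as well as the target before invoking rigidity: since $G(R,J_R)$ is the greatest proper normal subgroup of $G(R)$ (by Abe's result, cited in the paper), the isomorphism $\varphi$ necessarily carries $G(R,J_R)$ onto $G(S,J_S)$ and therefore descends to an isomorphism $\overline\varphi\colon G(R/J_R)\to G(S/J_S)$ between Chevalley groups over the two residue \emph{fields}. To this isomorphism the field-level theorem applies verbatim, giving $\overline\varphi=i_{\overline g}\circ\overline\rho$ with $\overline\rho$ a ring isomorphism; any ring isomorphism fixes every matrix with entries in the prime subring, and lifting $\overline g$ to an invertible matrix over the local ring $S$ finishes the proof. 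If you insert this ``maximal normal subgroup'' step, your argument becomes the paper's; without it, the middle step does not go through.
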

\begin{proof} Let $J_R$ be the maximal ideal (radical) of~$R$, $k$ the residue field $R/J_R$. Then the group $G(R,J_R)$ generated by all $x_{\alpha}(t)$, $\alpha\in \Phi$, $t\in J_R$, is the greatest normal proper subgroup in~$G(R)$ (see~\cite{Abe1}). Therefore under the action of~$\varphi$ the group $G(R,J_R)$ is mapped to $G(S,J_S)$.

By this reason the isomorphism $$ \varphi: G (R)\to G(S) $$ induces  an isomorphism $$ \overline \varphi: G
(R)/G(R,J_R)=G (R/J_R)\to G(S/J_S). $$ The groups $G(R/J_R)$ and $G(S/J_S)$ are Chevalley groups over fields, so that the isomorphism
$\overline \varphi$ is standard, i.\,e., it is $$ \overline \varphi =  i_{\overline g} \overline \rho,\quad \overline
g\in G(S/J_S)\quad \text{ (see~\cite{Steinberg}, \S\,10).} $$
 Clear that there exists a matrix $g\in \GL_n(S)$ such that its image under factorization   $S$ by~$J_S$ is~$\overline
g$. Note that it is not necessary  $g\in N(G(S))$.

Consider the mapping $\varphi'= i_{g^{-1}} \varphi$. It is an isomorphism of the group
 $G(R)\subset \GL_n(R)$ onto some subgroup in $\GL_n(S)$,
with the property that its image under factorization the rings by their radicals is the isomorphism $\overline \rho$.

Since the isomorphism  $\overline \rho$ acts identically on matrices with all elements generated by the unit of~$k$, we have that ane matrix $A\in G(R)$ with elements from the subring of~$R$, generated by~$1$, is mapped under the action of~$\varphi'$ to some matrix from the set~$A\cdot \GL_n(S,J_S)$.
 \end{proof}

Let $a\in G (R)$, $a^2=1$. Then the element $e=\frac{1}{2} (1+a)$ is an idempotent of the ring $M_n(R)$. This idempotent $e$ defines a decomposition of the free $R$-module $V=R^n$: $$ V=eV\oplus (1-e)V=V_0\oplus V_1 $$ (the modules $V_0$,
$V_1$ are free, because every projective module over a local ring is free). Let $\overline V=\overline V_0 \oplus
\overline V_1$ be a decomposition of the $k$-module~$\overline V$ with respect to~$\overline a$, and
 $\overline e=\frac{1}{2} (1+\overline a)$.

Then we have

\begin{proposition}\label{pr1_1} Modules \emph{(}subspaces\emph{)}
 $\overline V_0$, $\overline V_1$ are the images of $V_0$, $V_1$ under factorization  by~$J$.
\end{proposition}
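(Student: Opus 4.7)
The plan is to exploit the fact that the canonical reduction map $\pi : V \to \overline V = V/JV$ is $R$-linear (equivalently, an $R$-module surjection onto the $k$-module $\overline V$), and it intertwines the action of the idempotent $e$ with the action of its reduction $\overline e$. Concretely, first I would verify that $\pi(e) = \overline e$: since the hypothesis of the theorem ensures $1/2 \in R$, the element $e = \tfrac{1}{2}(1+a)$ lies in $M_n(R)$, and its image in $M_n(k)$ is $\tfrac{1}{2}(1+\overline a) = \overline e$. Consequently $\pi(ev) = \overline e\,\pi(v)$ and $\pi((1-e)v) = (1-\overline e)\,\pi(v)$ for every $v\in V$.

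Next I would establish the two inclusions $\pi(V_0) \subseteq \overline V_0$ and $\pi(V_1) \subseteq \overline V_1$. Any element of $V_0 = eV$ has the form $ev$ for some $v \in V$, so $\pi(ev) = \overline e\,\pi(v) \in \overline e\,\overline V = \overline V_0$; similarly for $V_1$. For the reverse inclusions, surjectivity of $\pi$ gives, for each $\overline w \in \overline V_0 = \overline e\,\overline V$, some $w \in V$ with $\pi(w) = \overline w$; then $ew \in V_0$ and $\pi(ew) = \overline e\,\overline w = \overline w$ (using $\overline e^{\,2} = \overline e$), so $\pi(V_0) = \overline V_0$, and likewise $\pi(V_1) = \overline V_1$.

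This is really the whole argument; there is no serious obstacle beyond checking that one has $1/2$ available to form the idempotent on both sides. What deserves emphasis, though, is that $V_0$, $V_1$, $\overline V_0$, $\overline V_1$ are each free (over $R$, resp.\ $k$), because $R$ is local and projective modules over a local ring are free, so the surjection $\pi|_{V_i} : V_i \twoheadrightarrow \overline V_i$ is precisely the reduction $V_i \otimes_R k \cong \overline V_i$ and hence has kernel $JV_i$. This justifies saying that $\overline V_0$ and $\overline V_1$ \emph{are} the images of $V_0$ and $V_1$ under factorization by $J$, as claimed.
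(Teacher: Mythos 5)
Your argument is correct and follows essentially the same route as the paper's own proof: both rest on the observation that reduction modulo $J$ intertwines $e$ with $\overline e$ (giving $\pi(V_i)\subseteq\overline V_i$), and then obtain the reverse inclusions from surjectivity of the reduction map together with idempotency (the paper phrases this via the decomposition $x=x_0+x_1$ and $\overline e\,\overline x=\overline x_0$, which is the same computation). The closing remark about freeness and the kernel $JV_i$ is a harmless addition not needed for the statement.
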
 \begin{proof} Denote the images of $V_0$, $V_1$ under factorization $R$ by~$J$ by $\widetilde V_0$, $\widetilde
V_1$, respectively. Since $V_0=\{ x\in V\mid ex=x\},$ $V_1= \{ x\in V\mid ex=0\},$  we have
 $\overline e(\overline x)=\frac{1}{2}(1+\overline a)(\overline x)=\frac{1}{2}
(1+\overline a(\overline x))=\frac{1}{2}(1+\overline{a(x)})=\overline{e(x)}$. Then $\widetilde V_0\subseteq \overline
V_0$, $\widetilde V_1\subseteq \overline V_1$.

Let $x=x_0+x_1$, $x_0\in V_0$, $x_1\in V_1$. Then $\overline e(\overline x)=\overline e(\overline x_0)+\overline e
(\overline x_1)=\overline x_0$. If $\overline x\in \widetilde V_0$, then $\overline x=\overline x_0$.
\end{proof}

 Let
$b=\varphi'(a)$. Then $b^2=1$ and $b$ is equivalent to $a$ modulo~$J$.

\begin{proposition}\label{pr1_2} Suppose that $ a\in G(R)$, $b\in G(S)$ $a^2=b^2=E$, $a$ is a matrix with elements from the subring of~$R$, generated by the unit, $b$ and $a$ are equivalent modulo~$J_S$ \emph{(}since all elements of $a$ are generated by unit, we can consider it as an element of $M_n(S)$\emph{)}, $V=V_0\oplus V_1$ is a decomposition of~$V$
with respect to~$a$, $V=V_0'\oplus V_1'$ is decomposition of~$V$ with respect to~$b$. Then $\dim V_0'=\dim V_0$, $\dim V_1'=\dim
V_1$.
\end{proposition}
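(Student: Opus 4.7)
The plan is to reduce everything modulo the radical $J_S$ and exploit the fact that $a$ and $b$ have the same image there, then use Proposition~\ref{pr1_1} to identify the reductions of the decomposition summands. Since $R$ and $S$ are local and $1/2\in S$, the idempotents $e=\tfrac12(1+a)$, $1-e$, $e'=\tfrac12(1+b)$, $1-e'$ split $V$ into direct summands that are projective, hence free, $S$-modules; so $\dim V_0$, $\dim V_1$, $\dim V_0'$, $\dim V_1'$ make sense as $S$-ranks.

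First, I would note that over the residue field $k=S/J_S$ the rank of a finitely generated free $S$-module equals the $k$-dimension of its reduction: if $M$ is a free $S$-summand of $S^n$, then $\dim_S M=\dim_k(M/J_S M)=\dim_k\overline M$, where $\overline M$ denotes the image of $M$ in $\overline V=k^n$. Applying this to the four summands gives
\[
\dim V_0=\dim_k\overline{V_0},\quad \dim V_1=\dim_k\overline{V_1},\quad \dim V_0'=\dim_k\overline{V_0'},\quad \dim V_1'=\dim_k\overline{V_1'}.
\]

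Next I would invoke Proposition~\ref{pr1_1}: $\overline{V_0}$ and $\overline{V_1}$ are precisely the $\pm 1$-eigenspaces of $\overline a$ on $\overline V$, and $\overline{V_0'}$, $\overline{V_1'}$ are the $\pm 1$-eigenspaces of $\overline b$ on $\overline V$. By hypothesis $b\equiv a\pmod{J_S}$, so $\overline a=\overline b$ in $M_n(k)$, and therefore their eigenspace decompositions coincide: $\overline{V_0}=\overline{V_0'}$ and $\overline{V_1}=\overline{V_1'}$. Combining this with the rank formula above yields $\dim V_0=\dim V_0'$ and $\dim V_1=\dim V_1'$, which is the proposition.

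No step looks like a serious obstacle; the only care needed is to justify that the summands are free (which follows because projective modules over a local ring are free, applied to the idempotent splitting by $e$ and $e'$) and that reduction commutes with taking the $\pm 1$-eigenspaces, which is exactly the content of Proposition~\ref{pr1_1}. The proof is otherwise a direct bookkeeping argument.
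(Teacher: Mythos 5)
Your proof is correct and follows essentially the same route as the paper: reduce modulo $J_S$, use Proposition~\ref{pr1_1} together with $\overline a=\overline b$ to see that the reduced decompositions coincide, and conclude that the ranks agree. The only cosmetic difference is that the paper lifts a basis of $\overline{V_0'}\oplus\overline{V_1'}$ back to $V$ via an invertible change of basis, whereas you invoke the standard fact that the rank of a free direct summand equals the $k$-dimension of its reduction; both come to the same thing.
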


\begin{proof} We have an  $S$-basis of~$V$ $\{ e_1,\dots,e_n\}$ such that $\{ e_1,\dots,e_k\}\subset V_0$, $\{
e_{k+1},\dots, e_n\}\subset V_1$. Clear that $$ \overline a \overline e_i=\overline{ae_i}=\overline {(\sum_{j=1}^n a_{ij}
e_j)}= \sum_{j=1}^n \overline a_{ij} \overline e_j. $$ Let $\overline V=\overline V_0\oplus \overline V_1$, $\overline
V = \overline V_0'\oplus \overline V_1'$ be decompositions of $k=S/J_S$-module (space)~$\overline V$ with respect to $\overline
a$ and $ \overline b$. Clear that $\overline V_0= \overline V_0'$, $\overline V_1 =\overline V_1'$. Therefore, by Proposition~\ref{pr1_1}, the images of $V_0$ and $V_0'$, $V_1$ and $V_1'$ under factorization by~$J_S$ coincide. Take such
$\{ f_1,\dots, f_k\}\subset V_0'$, $\{ f_{k+1},\dots, f_n\}\subset V_1'$, that $\overline f_i=\overline e_i$,
$i=1,\dots,n$. Since a matrix that maps the basis $\{ e_1,\dots, e_n\}$ to $\{ f_1,\dots, f_n\}$ is invertible (it is equivalent to the unit matrix modulo~$J_S$), we have that $\{ f_1,\dots, f_n\}$ is an $S$-basis of~$V$. Cleat that $\{ f_1,\dots, f_k\}$
is an $S$-basis of $V_0'$, $\{ v_{k+1},\dots, v_n\}$ is an   $S$-basis of $V_1'$.
\end{proof}

\section{Images of~$w_{\alpha_i}$}

Consider Chevalley groups $G(R)$ and $G(S)$  of type $G_2$, their adjoint representations in the groups  $\GL_{14}(R)$ and $\GL_{14}(S)$, in bases of weight vectors $v_1=x_{\alpha_1},
v_{-1}=x_{-\alpha_1}, \dots, v_6=x_{\alpha_6}, v_{-6}=x_{-\alpha_{6}}, V_1=h_{1},V_2=h_{2}$, corresponding to the Chevalley basis of~$G_2$.

We suppose that by the isomorphism  $\varphi$ we constructed the isomorphism  $\varphi'= i_{g^{-1}} \varphi$, described in the previous section. Recall that it is an isomorphism of the group
 $G(R)\subset \GL_n(R)$ on  some subgroup of $\GL_n(S)$,
with the property that its image under factorization ring by their radical is a ring isomorphism $\overline
\rho$.

Consider the matrices $h_{\alpha_1}(-1), h_{\alpha_2}(-1)$  in $G(R)$. They are
 \begin{align*}
 h_{\alpha_1}(-1)&=\diag [1,1,-1,-1,-1,-1,-1,-1,-1,-1,1,1,1,1],\\
h_{\alpha_2}(-1)&=\diag[-1,-1,1,1,-1,-1,1,1,-1,-1,-1,-1,1,1] .\end{align*}

By Proposition~\ref{pr1_2} we know that every matrix $h_i=\varphi'(h_{\alpha_i}(-1))$ in some basis is diagonal with $\pm 1$ on its diagonal, and
the number of  $1$ and
$-1$ coincides with their number for the matrix $h_{\alpha_i}(-1)$.
Since $h_1$ and $h_2$ commute, there exists a basis, where  $h_1$ and $h_2$ have the same form as $h_{\alpha_1}(-1)$ and $h_{\alpha_2}(-1)$. Suppose that we came to this basis with the help of the matrix~$g_1$. Clear that
 $g_1\in \GL_n(S,J_S)$. Consider the mapping
 $\varphi_1=i_{g_1}^{-1} \varphi'$. It is also an isomorphism of the group
 $G(R)$ onto some subgroup of $\GL_n(S)$ such that its image under factorization rings by their radicals is~$\overline \rho$, and
$\varphi_1(h_{\alpha_i}(-1))=h_{\alpha_i}(-1)$ for  $i=1,2$.

Instead of $\varphi'$ we now consider the isomorphism~$\varphi_1$.

\bigskip

{\bf Remark 1.} Every element $w_i=w_{\alpha_i}(1)$ moves by conjugation  $h_i$ to each other, therefore its image has a block-monomial form. In particular, this image can be rewritten as a block-diagonal matrix, where the first block is $12\times 12$, and the second is $2\times 2$.
\bigskip

Consider the first basis vector after the last basis change in $\GL_{14}(S)$. Denote it by~$e$. The Weil group  $W$
acts transitively on the set of roots of the same length, therefore for every
root~$\alpha_i$ of the same length as the first one,
there exists such $w^{(\alpha_i)}\in W$, that $w^{(\alpha_i)}
\alpha_1=\alpha_i$. Similarly, all roots of the second length are also conjugate under the action
of~$W$. Let $\alpha_k$ be the first root of the length that is not equal to the length of~$\alpha_1$, and let $f$ be
the $k$-th basis vector after the last basis change. If $\alpha_j$ is a root conjugate to $\alpha_k$, then let us denote by $w_{(\alpha_j)}$ an element of~$W$ such that
$w_{(\alpha_j)} \alpha_k=\alpha_j$. Consider now the basis
$e_1,\dots, e_{14}$, where $e_1=e$, $e_k=f$,
for $1< i\leqslant 12$ either $e_i=\varphi_1(w^{(\alpha_i)})e$, or
$e_i=\varphi_1(w_{(\alpha_i)})f$ (it depends of the length of $\alpha_k$);  we do not move $e_{13}$, $e_{14}$. Clear that the matrix of this basis change is equivalent to the unit modulo radical. Therefore the obtained set of vectors also is a basis.

Clear that the matrices  $\varphi_1(w_i)$ ($i=1,2$) on the basis part
 $\{ e_1,\dots,e_{12}\}$
coincide with the matrices for $w_i$ in the initial basis of weight vectors.
Since $h_i(-1)$ are squares of $w_i$, then there images are not changed in the new basis.

Besides, we know (Remark 1) that every matrix $\varphi_1(w_i)$
is block-diagonal up to decomposition of basis in the first  $12$
and last $2$ elements. Therefore the last part of basis consisting of $2$ elements, can be changed independently.

Denote the matrices $w_i$ and $\varphi_1(w_i)$ on this basis part by $\widetilde w_i$ and $\widetilde{\varphi_1(w_i)}$
respectively, and the $2$-generated module generated by $e_{13}$ and $e_{14}$, by $\widetilde V$.

\begin{lemma}\label{l3_3} For the root system  $G_2$ there exists such a basis that $\widetilde{\varphi_1(w_1)}$ and
$\widetilde{\varphi_1(w_2)}$ in this basis are $\widetilde{w_1}$ and $\widetilde{w_2}$. Namely, they are equal to  $$
\begin{pmatrix} -1& 3\\ 0& 1 \end{pmatrix} \text{ and } \begin{pmatrix} 1& 0\\ 1& -1 \end{pmatrix}. $$
\end{lemma}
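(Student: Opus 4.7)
Write $W_i := \widetilde{\varphi_1(w_i)}$ for $i = 1, 2$. Two observations are immediate. First, since $w_i^2 = h_{\alpha_i}(-1)$ and $\varphi_1(h_{\alpha_i}(-1)) = h_{\alpha_i}(-1)$ acts as the identity on the Cartan part $\widetilde V$ (the last two diagonal entries of $h_{\alpha_i}(-1)$ are $+1$), we have $W_i^2 = I$. Second, $W_i \equiv \widetilde{w_i} \pmod{J_S}$, because $\varphi_1$ reduces modulo $J_S$ to $\overline\rho$, which fixes integer matrices. The plan is three-staged: (i) normalize so that $W_1 = \widetilde{w_1}$; (ii) derive a scalar constraint on $W_2$ from the Weyl-group braid relation for $G_2$; (iii) adjust within the centralizer of $\widetilde{w_1}$ to obtain $W_2 = \widetilde{w_2}$.

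\emph{Stage (i).} By the reasoning of Proposition~\ref{pr1_2} applied to the involution $W_1$, the free rank-two $S$-module $\widetilde V$ decomposes as $V_-^1 \oplus V_+^1$ with each summand free of rank one, reducing modulo $J_S$ to the corresponding eigenspace of $\widetilde{w_1}$. Choose generators $u_{-} \in V_-^1$ and $u_{+} \in V_+^1$ lifting the residual eigenvectors $(1,0)$ and $(3,2)$ of $\widetilde{w_1}$. Setting $\tilde e_1 := u_{-}$ and $\tilde e_2 := -\tfrac{3}{2} u_{-} + \tfrac{1}{2} u_{+}$ gives an $S$-basis of $\widetilde V$ whose change-of-basis matrix reduces to the identity, and a direct check shows $W_1 \tilde e_1 = -\tilde e_1$ and $W_1 \tilde e_2 = 3 \tilde e_1 + \tilde e_2$, i.e., $W_1 = \widetilde{w_1}$ in this basis.

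\emph{Stage (ii).} In the new basis write $W_2 = \begin{pmatrix} a & b \\ c & d \end{pmatrix}$, with $a \equiv 1$, $b \equiv 0$, $c \equiv 1$, $d \equiv -1 \pmod{J_S}$. Reading the $(2,1)$-entry of $W_2^2 = I$ yields $c(a+d) = 0$; as $c$ is a unit, $d = -a$ and $a^2 + bc = 1$. Next, $(s_{\alpha_1} s_{\alpha_2})^6 = 1$ in the Weyl group of $G_2$, so $(w_1 w_2)^6$ lies in the Cartan torus $T$; since $T$ acts trivially on the Cartan subspace $\widetilde V$ in the adjoint representation, $(W_1 W_2)^6 = I$. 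Writing $M := W_1 W_2$, one computes $\operatorname{tr} M = 3c - 2a =: t$ and $\det M = a^2 + bc = 1$, so $M^2 = tM - I$ by Cayley--Hamilton, and iterating gives
\begin{equation*}
  M^6 = t(t^2 - 1)(t^2 - 3)\, M + (3 t^2 - t^4 - 1)\, I.
\end{equation*}
Setting $M^6 = I$ and reading the $(2,1)$-entry, using $M_{21} = c$, produces $c\cdot t(t^2-1)(t^2-3) = 0$, so $t(t^2-1)(t^2-3) = 0$. The factors $t$, $t+1$ and $t^2-3$ reduce modulo $J_S$ to $1$, $2$, $-2$, hence are units in $S$, forcing $t-1 = 0$. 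Thus $3c - 2a = 1$, so $a = (3c-1)/2$ and, from $a^2 + bc = 1$, $b = -3(c-1)(3c+1)/(4c)$.

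\emph{Stage (iii).} The centralizer of $\widetilde{w_1}$ in $\GL_2(S, J_S)$ consists of the matrices $P(p, q) = \begin{pmatrix} p & 3(q-p)/2 \\ 0 & q \end{pmatrix}$ with $p, q \in 1 + J_S$, and a direct computation shows
\begin{equation*}
  P(p,q)\, \widetilde{w_2}\, P(p,q)^{-1} = \begin{pmatrix} (3c'-1)/2 & -3(c'-1)(3c'+1)/(4c') \\ c' & -(3c'-1)/2 \end{pmatrix}, \qquad c' := q/p,
\end{equation*}
which is exactly the form of $W_2$ derived in Stage~(ii). Choosing $p = 1$ and $q = c$ yields $P(p,q)^{-1} W_2 P(p,q) = \widetilde{w_2}$; since $P(p,q)$ centralizes $\widetilde{w_1}$, the corresponding basis change preserves $W_1 = \widetilde{w_1}$ and produces $W_2 = \widetilde{w_2}$, finishing the proof. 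The main obstacle is Stage~(ii), deriving $t = 1$ from $(W_1 W_2)^6 = I$ over a local ring rather than a field; the key is that the off-diagonal entry $c$ is a unit, which forces the scalar cofactor $t(t^2-1)(t^2-3)$ to vanish, after which the remaining factors reduce to units modulo $J_S$ and isolate $t-1 = 0$.
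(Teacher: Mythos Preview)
Your argument is correct in substance and reaches the same conclusion, but the route differs from the paper's in the order of the two main moves. After normalizing $W_1=\widetilde{w_1}$, the paper \emph{first} applies an explicit change of basis in the centralizer of $\widetilde{w_1}$ to force the $(1,1)$-entry of $W_2$ to equal~$1$; the involution condition then pins $W_2$ down to $\left(\begin{smallmatrix}1&0\\ c'&-1\end{smallmatrix}\right)$, and the braid identity $(w_1w_2)^3=(w_2w_1)^3$ in $G(R)$ (preserved by $\varphi_1$) gives the cubic $3(3c'-1)(c'-1)=0$, hence $c'=1$. You invert this order: you first exploit $(W_1W_2)^6=I$ via Cayley--Hamilton to obtain the quintic $t(t^2-1)(t^2-3)=0$ in $t=\operatorname{tr}(W_1W_2)$, isolate $t=1$, and only then recognize the residual one-parameter family as exactly the conjugation orbit of $\widetilde{w_2}$ under the centralizer of $\widetilde{w_1}$. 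Your argument is more structural (identifying the full centralizer and orbit), while the paper's is shorter and entirely explicit.

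One step in Stage~(ii) needs tightening. You deduce $(W_1W_2)^6=I$ from ``$(w_1w_2)^6\in T$ and $T$ acts trivially on $\widetilde V$'', but that reasoning literally yields only $(\widetilde{w_1}\widetilde{w_2})^6=I$; what you need is that $\varphi_1\bigl((w_1w_2)^6\bigr)$, not $(w_1w_2)^6$ itself, restricts to the identity on~$\widetilde V$. The fix is immediate: $(w_1w_2)^6$ is an integer-matrix torus element, hence lies in $\langle h_{\alpha_1}(-1),\,h_{\alpha_2}(-1)\rangle$, and $\varphi_1$ fixes both generators by construction, so $\varphi_1\bigl((w_1w_2)^6\bigr)=(w_1w_2)^6$, which indeed acts trivially on~$\widetilde V$. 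The paper's use of $(w_1w_2)^3=(w_2w_1)^3$ sidesteps this issue altogether, since that equation holds in $G(R)$ and therefore transfers directly under~$\varphi_1$.
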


\begin{proof}
Since $\widetilde w_1$ is an involution and $\widetilde V_1^1$ has dimension~$1$, there exists a basis $\{ e_1,e_2\}$, where $\widetilde{\varphi_1(w_1)}$ has the form $\diag [-1,1]$. In the basis  $\{ e_1, e_2-3/2e_1\}$ the matrix has the obtained form for~$G_2$.

Let the matrix $\widetilde{\varphi_1(w_2)}$ in this new basis be $$ \begin{pmatrix} a& b\\ c& d \end{pmatrix}. $$
Make the basis change with the help of $$ \begin{pmatrix} 1& (1-a)/c\\ 0& 1+\frac{2(1-a)}{c} \end{pmatrix} $$
(it is possible, since $c$ is equivalent to the unit modulo radical). Under such basis change the matrix $\widetilde{\varphi_1(w_1)}$ is not change and the matrix $\widetilde{\varphi_1(w_2)}$ becomes $$
\begin{pmatrix} 1& b'\\ c'& d' \end{pmatrix}. $$ Since this last matrix is an involution, we have $c'(1+d')=0$, $1+b'c'=1$.
Therefore, $d'=-1$, $b'=0$.

 Besides,
 $$ \left(\begin{pmatrix} -1& 1\\ 0& 1 \end{pmatrix} \begin{pmatrix} 1& 0\\ c'& -1 \end{pmatrix}\right)^3
= \left(\begin{pmatrix} 1& 0\\ c'& -1 \end{pmatrix} \begin{pmatrix} -1& 1\\ 0& 1 \end{pmatrix}\right)^3, $$ consequently
$3c'(3c'-1)(c'-1)=0$. Since $c'\equiv 1\mod J$, $3c'-1\equiv 2 \mod J$, $3\in R^*$, we have $c'-1=0$.
 \end{proof}

Therefore we can now come from the isomorphism~$\varphi_1$ under consideration to an isomorphism $\varphi_2$ with all properties of $\varphi_1$ and such that $\varphi_2(w_1)=w_1$, $\varphi_2(w_2)=w_2'$.

We suppose now that an isomorphism~$\varphi_2$ with all these properties is given.

\section{Images of $x_{\alpha_i}(1)$ and diagonal matrices.}\leavevmode

Recall that \\ $w_1=-e_{1,2}-e_{2,1}+e_{3,9}+e_{4,10}-e_{9,3}-e_{10,4}+e_{5,7}+e_{6,8}-e_{7,5}-e_{8,6}+e_{11,,11}+e_{12,12}
-e_{13,13}+e_{14,14}+3e_{13,14}$;\\
$w_2=-e_{3,4}-e_{4,3}+e_{1,5}+e_{2,6}-e_{5,1}-e_{6,2}+e_{7,7}+e_{8,8}+e_{9,11}+e_{10,12}-e_{11,9}-e_{12,10}+
e_{13,13}-e_{14,14}+e_{14,13}$;\\
$x_{\alpha_1}(1)=E-e_{1,2}-2e_{1,13}+3e_{1,14}-e_{4,6}-e_{4,8}-e_{4,10}+3e_{5,3}+2e_{6,8}+3e_{6,10}-3e_{7,3}-2e_{7,5}
+3e_{8,10}+e_{9,3}+e_{9,5}-e_{9,7}+e_{13,2}$;\\
$x_{\alpha_2}(1)=E+e_{2,6}-e_{3,4}+e_{3,13}-2e_{3,14}-e_{5,1}+e_{10,12}-e_{11,9}+e_{14,4}$.

 Since  $x_1=\varphi_2(x_{\alpha_1}(1))$ commutes with
$h_{\alpha_1}(-1)$ and with $w_{3\alpha_1+2\alpha_2}=w_2w_1w_2w_1^{-1}{w_2^{-1}}$, we have that $x_1$  can be decomposed to the blocks $\{ v_1,v_{-1}, v_{6}, v_{-6}, V_1,V_2\}$ and $\{ v_2,v_{-2},v_3, v_{-3}, v_4,v_{-4}, v_5, v_{-5}\}$; on the first block the matrix is
$$
\begin{pmatrix}
y_1& y_2& y_3& -y_3& y_4& -\frac{3y_4}{2}\\
y_5& y_6& y_7& -y_7& y_8& -\frac{3y_8}{2}\\
y_9& y_{10}& y_{11}& y_{12}& y_{13}& y_{14}\\
-y_9& -y_{10}& y_{12}& y_{11}& -y_{13}& 3y_{13}+y_{14}\\
y_{15}& y_{16}& y_{17}& y_{17}+3y_{19}& y_{18}& \frac{3(y_{20}-y_{18})}{2}\\
0& 0& y_{19}& y_{19}& 0& y_{20}
\end{pmatrix};
$$
on the second block it is
$$
\begin{pmatrix}
y_{21}& y_{22}& y_{23}& y_{24}& -y_{25}& -y_{26}& -y_{27}& -y_{28}\\
y_{29}& y_{30}& y_{31}& y_{32}& -y_{33}& -y_{34}& -y_{35}& -y_{36}\\
y_{37}& y_{38}& y_{39}& y_{40}& -y_{41}& -y_{42}& -y_{43}& -y_{44}\\
y_{45}& y_{46}& y_{47}& y_{48}& -y_{49}& -y_{50}& -y_{51}& -y_{52}\\
y_{52}& y_{51}& y_{50}& y_{49}& y_{48}& y_{47}& y_{46}& y_{45}\\
y_{44}& y_{43}& y_{42}& y_{41}& y_{40}& y_{39}& y_{38}& y_{37}\\
y_{36}& y_{35}& y_{34}& y_{33}& y_{32}& y_{31}& y_{30}& y_{29}\\
y_{28}& y_{27}& y_{26}& y_{25}& y_{24}& y_{23}& y_{22}& y_{21}
\end{pmatrix}.
$$

 Similalrly, since $x_2=\varphi_2(x_{\alpha_2}(1))$ commutes with
$h_{\alpha_2}(-1)$ and $w_{2\alpha_1+\alpha_2}=w_1w_2w_1{w_2^{-1}}w_1^{-1}$, we have decomposition of~$x_2$ on the blocks $\{ v_1,v_{-1},v_3, v_{-3}, v_5,v_{-5},v_6,v_{-6}\}$ and $\{v_2,v_{-2}, v_4,v_{-4}, V_1,V_2\}$; on the first block the matrix is
$$
\begin{pmatrix}
z_1& z_2& z_3& z_4& z_5& z_6& z_7& z_8\\
z_9& z_{10}& z_{11}& z_{12}& z_{13}& z_{14}& z_{15}& z_{16}\\
-z_{12}& -z_{11}& z_{10}& z_9& -z_{16}& -z_{15}& z_{13}& z_{14}\\
-z_4& -z_3& z_2& z_1& -z_8& -z_7& z_6& z_5\\
z_{17}& z_{18}& z_{19}& z_{20}& z_{21}& z_{22}& z_{23}& z_{24}\\
z_{25}& z_{26}& z_{27}& z_{28}& z_{29}& z_{30}& z_{31}& z_{32}\\
-z_{28}& -z_{27}& z_{26}& z_{25}& -z_{32}& -z_{31}& z_{30}& z_{29}\\
-z_{20}& -z_{19}& z_{18}& z_{17}& -z_{24}& -z_{23}& z_{22}& z_{21}
\end{pmatrix};
$$
on the second block it is
$$
\begin{pmatrix}
z_{33}& z_{34}& -z_{35}& z_{35}& z_{36}& 2z_{36}\\
z_{37}& z_{38}& -z_{39}& z_{39}& z_{40}& -2z_{40}\\
z_{41}& z_{42}& z_{43}& z_{44}& z_{45}& z_{46}\\
-z_{41}& -z_{42}& z_{44}& z_{43}& z_{45}+z_{46}& -z_{46}\\
0& 0& z_{47}+z_{48} & z_{47}+z_{48} & 2z_{49}+z_{50}& 0\\
z_{51}& z_{52}& z_{47}& z_{48}& z_{49}& z_{50}
\end{pmatrix}.
$$

Therefore we have $104$ variables $y_1,\dots, y_{52}, z_1,\dots, z_{52}$, where $y_1$, $y_6$, $y_{11}$, $y_{16}$, $y_{18}$, $y_{20}$, $y_{21}$, $y_{30}$, $y_{34}$, $y_{36}$, $y_{39}$, $y_{48}$, $z_1$, $z_{10}$, $z_{12}$, $z_{21}$, $z_{30}$, $z_{32}$, $z_{33}$, $z_{36}$, $z_{38}$,  $z_{43}$, $z_{50}$, $z_{52}$ are equivalent to~$1$ modulo radical,  $y_2$,  $y_{32}$, $z_{34}$ are equivalent to~$-1$, $y_4, y_{50}, $ are equivalent to~$-2$,  $y_{37}, -y_{52}$ are equivalent to~$3$,  all other elements are from the radical.

We apply step by step four basis changes, commuting with each other and with all matrices $w_{i}$. These changes are represented by matrices $C_1$, $C_2$, $C_3$, $C_4$. Matrices $C_1$ and $C_2$ are block-diagonal, with  $2\times 2$ blocks. On all  $2\times 2$ blocks, corresponding to short roots, the matrix  $C_1$ is unit, on all  $2\times 2$ blocks, corresponding to long roots, it is
$$
\begin{pmatrix}
1& -z_{51}/z_{52}\\
-z_{51}/z_{52}& 1
\end{pmatrix}.
$$
On the last block it is unit.

Similarly,  $C_2$ is unit on the blocks corresponding to long roots, and on the last block. On the blocks corresponding to the short roots, it is
$$
\begin{pmatrix}
1& -y_{15}/y_{16}\\
-y_{15}/y_{16}& 1
\end{pmatrix}.
$$
Matrices $C_3$ and $C_4$ are diagonal, identical on the last  block, the matrix $C_3$ is identical on all places, corresponding to short root, and scalar with multiplier~$a$ on all places corresponding to long roots. In the contrary, the matrix  $C_4$, is identical on all places, corresponding to long roots, and is scalar with multiplier~$b$ on all places, corresponding to short roots.

Since all these four matrices commutes with all $w_{i}$, $i=1,2$, then after basis change with any of these matrices all conditions for elements $x_1$ and $x_2$  still hold.

At the beginning we apply basis changes with the matrices $C_1$ and $C_2$. After that new $y_{15}$ in the matrix $x_1$ and $z_{51}$ in the matrix $x_2$ are equal to zero (for the convenience of notations we do not change names of variables). Then we choose  $a=-1/z_{52}$ (it is new $z_{52}$) and apply the third basis change. After it $z_{52}$ in the matrix $x_1$ becomes to be $1$. Clear that $z_{51}$ is still zero.

Finally, apply the last basis change with $b=1/y_{16}$ (where $y_{16}$ is the last one, obtained after all previous changes). We have that $y_{15}, z_{51}, z_{52}$ are not changed, and $y_{16}$ is now $1$.

Now we can suppose that $y_{15}=0, y_{16}=1, z_{51}=0, z_{52}=1$, and we have $100$ variables.

 Introduce $x_{1+2}=\varphi_2(x_{\alpha_1+\alpha_2}(1))=w_2'x_1{w_2^{-1}}'$, $x_{1+1+2}=\varphi_2(x_{2\alpha_1+\alpha_2}(1))=w_1x_{1+2}{w_1^{-1}}$, $x_{1+1+1+2}=\varphi_2(x_{3\alpha_1+\alpha_2}(1))=w_1x_2{w_1^{-1}}$, $x_{1+1+1+2+2}=\varphi_2(x_{3\alpha_1+2\alpha_2}(1))=w_2'x_{1+1+1+2}{w_2^{-1}}'$.

Now we will use the following conditions, that are true for elements $w_i$ and $x_i$:
\begin{align*}
Con1&=(x_2x_{1+2}=x_{1+2}x_2),\\
Con2&=(h_1x_2h_1x_2=E);\\
Con3&=(h_2x_1h_2x_1=E);\\
Con4&=(x_2x_{1+1+2}=x_{1+1+2}x_2);\\
Con5&=(x_2x_{1+1+1+2}=x_{1+1+1+2+2}x_{1+1+1+2}x_2);\\
Con6&=(x_{1+1+1+2}x_1=x_1x_{1+1+1+2};\\
Con7&=(x_1x_{1+1+2}=x_{1+1+1+2}^3x_{1+1+2}x_1);\\
Con8&=(w_1^3=x_1w_1x_1w_1^3x_1).
\end{align*}

Note that every matrix condition is  $196$
polynomial identities, where all polynomials have integer coefficients and depends of $y_i, z_j$. Temporarily numerate all variables as $v_1,\dots,v_{100}$.

Suppose that one of our polynomials can be rewritten in the form
$$
(v_{k_0}-\overline v_{k_0})A+v_1B_1+\dots+v_{k_0-1}B_{k_0-1}+v_{k_0+1}B_{k_0+1}+\dots+v_{100}B_{100}=0,
$$
where $\overline  v_{k_0}$ is such an integer number that is equivalent to $v_{k_0}$ modulo radical, the polynomial $A$ in invertible modulo radical, $B_i$ are some polynomials (the variable $v_{k_0}$ can enter in all polynomial and also in~$A$). Then
$$
v_{k_0}=-\frac{v_1B_1+\dots+v_{k_0-1}B_{k_0-1}+v_{k_0+1}B_{k_0+1}+\dots+v_{100}B_{100}}{A},
$$
we can substitute  the expression for $v_{k_0}$ in all other polynomial conditions. If we can choose $100$ such conditions that on every step we except one new variable, then on the last step we obtain the expression
$$
(v_{k_{100}}-\overline v_{k_{100}})C=0,
$$
where $C$ is some rational expression of variables
$v_1,\dots, v_{100}$, invertible modulo radical. Therefore,
we can say that $v_{k_{100}}=\overline v_{k_{100}}$, and consequently all other variables are equal to the integer numbers equivalent them modulo radical. The existence of the obtained $100$
conditions is equivalent to the existence of such $100$ conditions that
the square matrix consisting of all coefficients of these conditions modulo radical has an invertible determinant.

Since it is very complicated to write a matrix $100\times 100$, we will  sequentially take the obtained equations, but for  simplicity write coefficients $A$ and $B_i$ modulo radical (in the result these coefficients are just numbers $0$,  $\pm 1$, $\pm 2$, $\pm 3$).

We write below how the variables are expressed from the conditions (in brackets we write the number of the condition and the position there): $(Con1,14,3)$: $y_{22}=0$; $(Con1, 6,10)$: $y_3=0$; $(Con1,1,1)$: $y_{47}=-z_7$; $(Con1, 1,3)$: $y_{46}=0$;
$(Con1, 1,6)$: $y_{40}=-z_3$; $(Con1, 1,7)$ $y_{49}=-z_7+3z_{39}$; $(Con1, 1,9)$: $y_{43}=0$; $(Con1, 1,11)$: $y_{51}=0$; $(Con1, 2,1)$: $z_{15}=-3z_{20}$; $(Con1, 2,3)$: $y_{41}=0$; $(Con1, 2,5)$: $y_5=-3z_{18}$; $(Con1,2,7)$: $z_{44}=-3/2z_{20}$; $Con1, 2,9)$: $y_7=-3z_{24}$; $(Con1,3,1)$: $z_{20}=y_{23}+2z_{35}-z_9$; $(Con1,3,3)$: $z_{41}=0$;
$(Con1, 3,5)$: $z_{18}=z_{11}$; $(Con1, 4,1)$: $y_{23}=-2z_{39}$; $(Con1, 4,2)$: $z_{37}=-y_{24}$;
$(Con1,5,3)$: $ y_{35}=0$; $(Con1,7,5)$: $ z_3=0$;
$(Con1,11,5)$: $ y_9=0$; $(Con1,11,3)$:  $y_{24}=0$; $(Con1,10,3)$: $y_{27}=0$; $(Con1,9,3)$: $ y_{38}=0$;
$(Con1,4,5)$: $z_{11}=0$; $(Con1,5,5)$: $z_2=0$; $(Con2,3,3)$: $z_{33}=1$; $(Con2,14,4)$: $z_{50}=z_{38}$;
$(Con2,11,5)$: $z_{19}=0$; $(Con2,3,13)$: $z_{38}=1-z_{40}$;
$(Con2,10,5)$: $z_{27}=0$; $(Con2,5,5)$: $z_{10}=1$; $(Con2,6,6)$: $z_1=1$; $(Con2,1,9)$: $z_7=0$;
$(Con1,11,6)$: $z_{26}=y_{10}+z_4+y_{33}$;
$(Con2,7,7)$: $z_{43}=1$; $(Con2,9,9)$: $z_{23}=2(z_{21}-1)$; $(Con2,11,12)$: $ z_{31}:=-2z_{29}-z_{24}$;
$(Con3,1,1)$: $ y_1=1$; $(Con3,1,2)$: $y_4=1+2y_2-y_6$;
$(Con3,2,2)$: $y_8=2(y_6-1)$; $(Con3,4,4)$: $y_{30}=1$; $(Con3,14,14)$: $y_{20}=1$; $(Con3,13,13)$: $y_{18}=y_6$;
$(Con3,12,12)$: $y_{11}=1$; $(Con1,13,11)$: $ z_6=0$; $(Con4,2,2)$: $y_{42}=6z_{39}-6z_{35}+3z_9+3z_{10}+3z_4+3y_{33}$;
$(Con2,5,6)$: $z_4=-2z_9$; $(Con2,9,11)$: $z_{21}=1$; $(Con2,12,11)$: $ z_{22}=0$;
$(Con2,10,10)$: $z_{30}=1$; $(Con2,14,7)$: $z_{39}=0$; $(Con2,11,1)$: $y_{33}=2y_{28}-y_{10}+2z_9-z_{17}$;
$(Con3,12,2)$: $y_{13}=y_{10}$; $(Con3,10,6)$: $y_{25}=0$; $(Con3,10,5)$: $y_{26}=0$; $(Con3,7,7)$: $ y_{48}=1$;
$(Con3,3,3)$ $ y_{28}=1$; $(Con3,8,3)$: $ y_{44}=0$; $(Con2,5,9)$: $ z_5=2z_{16}-z_{14}$;
$(Con4,10,5)$: $z_{24}=-2z_{29}$; $(Con3,7,5)$: $y_{39}=1$; $(Con4,5,10)$: $ y_{45}=3z_{14}$;
$(Con4,11,12)$: $z_{16}=-3z_{28}-3z_{25}-z_{13}$; $(Con4,9,6)$: $y_{31}=-2z_9+3z_{17}$;
$(Con4,11,2)$: $ z_{9}=3z_{28}$; $(Con4,5,5)$: $ z_{14}=0$; $(Con2,1,12)$: $z_{8}=0$;
$(Con4,2,1)$: $z_{13}=-3z_{25}$; $(Con4,6,10)$: $y_{52}=-y_{37}+3y_{50}/2+3$;
$(Con4,10,10)$: $ y_{28}=-6z_{28}+3z_{17}$; $(Con1,13,1)$: $ z_{48}=-z_{47}$;
$(Con1,5,4)$: $ y_{37}=3/2+3/2y_6-3z_{12}-3y_2$;
$(Con4,13,13)$: $ y_{17}=3/2y_{19}-z_{45}-z_{46}$;
$(Con1,14,1)$:  $z_{47}=0$; $(Con2,7,4)$: $z_{46}=2z_{42}$; $(Con4,3,3)$: $ y_{19}=-y_{12}$;
$(Con5,3,2)$:  $z_{17}=z_{25}+z_{35}+2z_{28}$; $(Con5,3,3)$: $z_{29}=0$;
$(Con5,3,4)$: $ z_{40}=0$; $(Con5,3,5)$: $z_{35}=-z_{25}$; $(Con5,3,6)$: $z_{25}=-3z_{28}$;
$(Con5,3,13)$: $z_{49}=0$; $(Con5,14,12)$: $z_{32}=1$; $(Con5,11,4)$: $z_{36}=-z_{34}$;
$(Con5,9,4)$: $z_{34}=-1$; $(Con5,3,10)$: $z_{45}=-1/2y_{12}-2z_{42}$;
$(Con1,3,9)$: $ y_{12}=-9z_{28}$; $(Con3,11,2)$: $y_{10}=0$;
$(Con3,7,4)$: $z_{42}=0$; $(Con4,14,14)$: $y_{14}=0$; $(Con4,11,1)$: $z_{12}=1+6z_{28}$;
$(Con4,11,10)$: $y_{29}=-9z_{28}$; $(Con6,7,13)$: $ y_6=1-9/2z_{28}$; $(Con6,9,2)$: $y_{32}=-1+4z_{28}$;
$(Con3,9,5)$: $y_{50}=-2y_{34}-8z_{28}$;
 $(Con7,7,2)$: $y_2=-8y_{34}+7-119/4z_{28}$; $(Con8,13,1)$: $y_{34}=1+137/32z_{28}$;
$(Con8,3,9)$: $y_{36}=1$; $(Con8,13,2)$: $z_{28}=0$.

Thus,  $x_1=x_{\alpha_1}(1)$, $x_2=x_{\alpha_2}(1)$, consequently $\varphi_2(x_{\alpha}(1))=x_{\alpha}(1)$ for any root~$\alpha$.

Now look at the images (under $\varphi_2$) of $h_{\alpha}(t)$, $t\in R^*$.

 Let $h_t=\varphi_2(h_{\alpha_1}(t))$. Since $h_t$ commutes with $h_1,h_2, w_{\alpha_6}(1)$ and $x_{\alpha_6}(1)$, we directly have
{\tiny
$$
h_t=\left(\begin{array}{cccccccccccccc}
d_1& d_2& 0& 0& 0& 0& 0& 0& 0& 0& 0& 0& 0& 0\\
d_3& d_4& 0& 0& 0& 0& 0& 0& 0& 0& 0& 0& 0& 0\\
0& 0& d_5& 0& 0& -d_6& 0& 0& 0& 0& 0& 0& 0& 0\\
0& 0& 0& d_7& 0& 0& -d_8& 0& 0& 0& 0& 0& 0& 0\\
0& 0& 0& 0& d_9& 0& 0& -d_{10}& 0& 0& 0& 0& 0& 0\\
0& 0& 0& 0& 0& d_{11}& 0& 0& -d_{12}& 0& 0& 0& 0& 0\\
0& 0& d_{12}& 0& 0& d_{11}& 0& 0& 0& 0& 0& 0& 0& 0\\
0& 0& 0& d_{10}& 0& 0& d_9& 0& 0& 0& 0& 0& 0& 0\\
0& 0& 0& 0& d_8& 0& 0& d_{7}& 0& 0& 0& 0& 0& 0\\
0& 0& 0& 0& 0& d_{6}& 0& 0& d_{5}& 0& 0& 0& 0& 0\\
0& 0& 0& 0& 0& 0& 0& 0& 0& 0& d_{13}& 0& 0& 0\\
0& 0& 0& 0& 0& 0& 0& 0& 0& 0& 0& d_{13}& 0& 0\\
0& 0& 0& 0& 0& 0& 0& 0& 0& 0& 0& 0& d_{14}& \frac{3}{2}(d_{13}-d_{14})\\
0& 0& 0& 0& 0& 0& 0& 0& 0& 0& 0& 0& 0& d_{13}
\end{array}\right).
$$
}

Use now the conditions $h_{\alpha_1+\alpha_2}(t)=w_2 h_{\alpha_1}(t) w_2^{-1}$, $h_{2\alpha_1+\alpha_2}(t)= w_1 h_{\alpha_1+\alpha_2}(t) w_1^{-1}$ and $h_{\alpha_1}(t)h_{\alpha_1+\alpha_2}(t)=h_{2\alpha_1+\alpha_2}(t)$.
 They give us $w_1w_2h_tw_2^{-1}w_1^{-1}=w_2h_tw_2^{-1}h_t$, therefore $d_2d_{11}=0\Rightarrow d_2=0$; $d_3d_9=0\Rightarrow d_3=0$; $d_5(1-d_{13})=0\Rightarrow d_{13}=1$; $d_6=d_8=d_{10}=d_{12}=0$; $d_7=1/d_5$; $d_1=d_{11}^2$; $d_4=d_9^2$; $d_{11}=1/d_9$; $d_{14}=1$.

 Finally, use the fact that $h_{3\alpha_1+2\alpha_2}(t)=h_{\alpha_1+\alpha_2}(t)h_{2\alpha_1+\alpha_2}(t)$ commutes with $x_1$. It gives us $d_5=d_9^3$, therefore $h_t=h_{\alpha_1}(1/d_9)$.

\section{Final steps of the proof of Theorem 1.}

Now we have stated that for the root system $G_2$
$\varphi_2(x_{\alpha}(1))=x_{\alpha}(1)$, $\varphi_2(h_{\alpha}(t))=h_{\alpha}(s)$, $\alpha\in \Phi$,
$t\in R^*$, $s\in S^*$.

For every long root $\alpha_j$ there exists a root $\alpha_k$ such that
$h_{\alpha_k}(t)x_{\alpha_j}(1)h_{\alpha_k}(t)^{-1}=x_{\alpha_j}(t)$. Therefore, $\varphi_2(x_{\alpha_j}(t))
=x_{\alpha_j}(s)$. From the conditions written above and commutator formulas it follows
$\varphi_2(x_{\alpha}(t))=x_{\alpha}(s)$ for all $\alpha\in \Phi$.

Denote the mapping $t\mapsto s$ by $\rho: R^* \to S^*$. If  $t\notin R^*$, then $t\in J$, i.\,e., $t=1+t_1$, where
$t_1\in R^*$. Then $\varphi_2(x_\alpha(t))=\varphi_2(x_\alpha(1)x_\alpha(t_1))=x_\alpha(1)x_\alpha(\rho(t_1))=
x_\alpha(1+\rho(t_1))$, $\alpha\in \Phi$. Therefore we can continue the mapping $\rho$ on the whole ring~$R$ (by the formula
$\rho(t):=1+\rho(t-1)$ for $t\in J$), and obtain $\varphi_2(x_\alpha(t))=x_\alpha(\rho(t))$ for all $t\in R$, $\alpha\in
\Phi$.
 Clear that $\rho$ is injective, additive, multiplicative on invertible elements. Since every element of~$R$
is a sum of two invertible elements, we have that $\rho$ is multiplicative on the whole~$R$, i.\,e., it is an isomorphism of~$R$ onto some subring in~$S$.
Note that in this situation $C G(S)
C^{-1}=G(S')$ for some matrix $C\in \GL_{14}(S)$. Show that $S'=S$.

\begin{lemma}\label{porozhd} The Chevalley group $G(S)$ generates  $M_{14}(S)$ as a ring.
\end{lemma}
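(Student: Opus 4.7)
The plan is to show that the subring $R' \subseteq M_{14}(S)$ generated by the Chevalley group $G(S)$ equals the full matrix ring.

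\emph{Step 1: extract $S$-multiples of the root generators.} In the adjoint representation of $G_2$ the operator $\ad X_\alpha$ is nilpotent of index at most $4$, so $x_\alpha(t) = \exp(t\,\ad X_\alpha)$ is a polynomial of degree $\leq 3$ in $t$, say $x_\alpha(t) = E + tX_\alpha + t^2 B + t^3 C$. A direct computation of odd parts gives
$$
12\, t X_\alpha \;=\; 8\,x_\alpha(t) - 8\,x_\alpha(-t) - x_\alpha(2t) + x_\alpha(-2t),
$$
and since $1/2,1/3\in S$ imply $12\in S^*$, this expresses $tX_\alpha$ as an integer linear combination of elements of $G(S)$. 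Hence $tX_\alpha\in R'$ for every root $\alpha$ and every $t\in S$. Multiplying such elements inside the ring $R'$, I therefore have $s\cdot X_{\alpha_{i_1}}X_{\alpha_{i_2}}\cdots X_{\alpha_{i_n}}\in R'$ for every $s\in S$, every $n\geq 1$, and every sequence of roots.

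\emph{Step 2: Burnside over the residue field.} Because $\mathrm{char}(S/J_S)\neq 2,3$, the Lie algebra $\mathfrak{g}_2$ is simple over $k = S/J_S$ and its adjoint representation is absolutely irreducible. Burnside's density theorem then forces the $k$-subalgebra of $M_{14}(k)$ generated by $\{\overline{X_\alpha}\mid \alpha\in\Phi\}$ to coincide with all of $M_{14}(k)$. In particular, each matrix unit $E_{ij}\in M_{14}(k)$ can be written as a $k$-linear combination of monomials in the $\overline{X_\alpha}$'s.

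\emph{Step 3: Nakayama lift.} Lifting those coefficients from $k$ to $S$ produces elements $Y_{ij}\in R'$ (each an $S$-linear combination of monomials of length $\geq 1$ in the $X_\alpha$'s) with $Y_{ij}\equiv E_{ij}\pmod{J_S}$. The $196$ matrices $\{Y_{ij}\}$ reduce modulo $J_S$ to the standard basis of $M_{14}(k)$, so Nakayama's lemma ensures they form an $S$-basis of the free module $M_{14}(S)$. Given any $M=\sum c_{ij}Y_{ij}\in M_{14}(S)$ with $c_{ij}\in S$, absorbing each $c_{ij}$ into the leading $S$-coefficient of its monomial keeps the expression inside $R'$, yielding $M\in R'$ and hence $R'=M_{14}(S)$.

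The main obstacle is Step 2: I need the absolute irreducibility of the adjoint representation of $G_2$ over every field of characteristic $\neq 2,3$. Once that is granted, the Vandermonde-type extraction in Step 1 and the Nakayama lift in Step 3 are essentially formal.
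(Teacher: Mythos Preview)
Your argument is correct, but it follows a genuinely different route from the paper's. The paper gives a bare-hands construction: starting from the identity $\tfrac{1}{2}(x_{\alpha_2}(1)-E)^2 = e_{3,4}$, it manufactures matrix units one at a time by multiplying against Weyl-group elements and differences $x_\alpha(1)-E$, eventually producing every $e_{k,l}$ explicitly. Your proof instead works structurally: a Vandermonde combination extracts $tX_\alpha$ from four group elements, Burnside over the residue field $k=S/J_S$ gives the full matrix algebra there (using that the Chevalley Lie algebra of type $G_2$ is simple, hence its adjoint module absolutely irreducible, in characteristic $\ne 2,3$), and Nakayama lifts back to $S$; the length-$\ge 1$ monomial trick then lets you absorb arbitrary scalars. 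Each step checks out, including the observation that the non-unital associative span of the $\overline{X_\alpha}$ is a nonzero ideal of the simple ring $M_{14}(k)$ and hence all of it.

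What each approach buys: the paper's explicit computation is self-contained and, as the paper itself remarks right after the lemma, never uses $1/3$; your Vandermonde step divides by $12$, so you genuinely need $1/3\in S$. On the other hand, your argument is essentially type-independent --- replace ``$G_2$, $14$, characteristic $\ne 2,3$'' by ``any $\Phi$, $\dim\mathfrak g$, characteristic avoiding the bad primes'' and nothing changes --- whereas the paper's proof is hand-tailored to the $14\times14$ adjoint matrices of $G_2$.
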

\begin{proof}
The matrix
$\frac{1}{2}(x_{\alpha_2}(1)-1)^2$ is $e_{3,4}$ ($e_{i,j}$ is a matrix unit). Multiplying it to the suitable diagonal matrix we can obtain an arbitrary matrix of the form $\alpha\cdot e_{3,4}$ (since the invertible elements of~$S$
generate~$S$).

According to the fact that all long root are conjugate under the action of the Weil group, multiplying $e_{3,4}$ from left and right sides to suitable  $w_i,w_j$, we obtain all $e_{k,l}$, $k,l=3,4,9,10,11,12$.

 Then $e_{14,4}=(x_{\alpha_2}(1)-E)\cdot e_{4,4}+e_{3,4}$, again with the help of multiplying (from the right side) to different elements of the Weil group we obtain all $e_{14,l}$, $l=3,4,9,10,11,12$.

 Now $e_{14,14}=-(e_{14,3}(x_{\alpha_2}(1)-E)+e_{14,4})(w_1-E)$; $e_{3,14}=-1/2 (x_{\alpha_2}(1)-E)e_{14,14}$. As above, we obtain all $e_{l,14}$, $l=3,4,9,10,11,12$.

 Also we have $e_{14,13}=e_{14,3}(x_{\alpha_2}(1)-E)+e_{14,4}+2e_{14,14}$; $e_{3,13}=e_{3,3}(x_{\alpha_2}(1)-E)+2e_{3,14}+e_{3,4}$ (and directly obtain all $e_{l,13}$, $l=3,4,9,10,11,12$);
$e_{3,2}=e_{3,13}(x_{\alpha_1}(1)-E)$. Now use the fact that all short roots are also conjugate under the action of~$W$; we obtain all $e_{k,l}$, $k=3,4,9,10,11,12$, $l=1,2,5,6,7,8$.

 Since $e_{13,13}=1/4(h_1+E)(h_2+E)-e_{14,14}$,  $e_{1,13}=-1/2(x_{\alpha_1}-E)e_{13,13}$, $e_{13,2}=e_{13,13}(x_{\alpha_1}-E)$, we have all $e_{l,13}, e_{13,l}$, $l=1,2,5,6,7,8$, and after that by multiplying matrix units we obtain all $e_{l,k}$, $l,k=1,\dots,14$, and therefore the whole ring $M_{14}(S)$.

Lemma is proved.
 \end{proof}

Note that in the previous lemma we did  not suppose $1/3\in S$.

\begin{lemma}\label{Tema}
If for some $C\in \GL_{14}(S)$ we have $C G(S) C^{-1}= G(S')$, where $S'$ is a subring of~$S$, then $S'=S$. \end{lemma}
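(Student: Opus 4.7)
The plan is to combine Lemma~\ref{porozhd} with the fact that conjugation by $C\in\GL_{14}(S)$ is a ring automorphism of $M_{14}(S)$. The whole argument then reduces to tracking what subring of $M_{14}(S)$ is generated by each of the two groups $G(S)$ and $G(S')$.

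First, I observe that because $C$ and $C^{-1}$ both belong to $M_{14}(S)$, the map $A\mapsto CAC^{-1}$ is a ring automorphism of $M_{14}(S)$ that carries subrings to subrings. Applied to the subring generated by $G(S)$, which by Lemma~\ref{porozhd} is all of $M_{14}(S)$, it yields that the subring of $M_{14}(S)$ generated by $CG(S)C^{-1}=G(S')$ is $C M_{14}(S) C^{-1}=M_{14}(S)$.

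Second, I note that since each generator $x_{\alpha}(t)$ of $G(S')$ (for $t\in S'$, $\alpha\in \Phi$) has entries that are $\mathbb Z$-polynomials in $t$—because the exponential series $\exp(t x_\alpha)$ truncates to a matrix with integer coefficients—we have $G(S')\subseteq M_{14}(S')$. Hence the subring of $M_{14}(S)$ generated by $G(S')$ is contained in $M_{14}(S')$. Comparing the two descriptions yields $M_{14}(S)\subseteq M_{14}(S')$, and reading off the $(1,1)$-entry gives $S\subseteq S'$. Combined with the hypothesis $S'\subseteq S$, this forces $S'=S$.

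There is essentially no obstacle once Lemma~\ref{porozhd} is in hand; the only point worth double-checking is that we do \emph{not} need an analog of Lemma~\ref{porozhd} for $S'$ (which would require $1/2\in S'$, not a priori known), because one direction of the sandwich uses only the trivial containment $\langle G(S')\rangle \subseteq M_{14}(S')$, while the nontrivial direction $M_{14}(S)\subseteq \langle G(S')\rangle$ comes entirely from the ring automorphism induced by $C$ together with the already proved generation statement for $G(S)$.
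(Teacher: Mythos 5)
Your proof is correct and follows essentially the same route as the paper: Lemma~\ref{porozhd} combined with the observation that conjugation by $C\in\GL_{14}(S)$ is a ring automorphism of $M_{14}(S)$. If anything, your one-sided version is slightly more careful than the paper's, which asserts that $G(S')$ generates $M_{14}(S')$ (implicitly invoking Lemma~\ref{porozhd} over $S'$, whose hypotheses --- $1/2\in S'$ and the invertible elements generating the ring --- are not known a priori), whereas you need only the trivial containment of the subring generated by $G(S')$ in $M_{14}(S')$.
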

\begin{proof}
Suppose that $S'$ is a proper subring of~$S$.

Then $C M_{14}(S) C^{-1} =M_{14} (S')$, since the group $G(S)$ generates $M_{14}(S)$, and the group $G(S')=CG(S)
C^{-1}$ generates $M_{14}(S')$. It is impossible, since $C\in \GL_{14}(S)$.
\end{proof}

Therefore we have proved that   $\rho$ is an isomorphism from the ring~$R$ onto~$S$. Consequently the composition of the initial isomorphism~$\varphi'$ and some basis change with a matrix $C\in \GL_{14}(S)$, (mapping $G(S)$ onto itself),
is a ring isomorphism~$\rho$. Therefore, $\varphi' = i_{C^{-1}} \circ \rho$.

So Theorem 1 is proved.

\section{Proof of Theorem 2.}

In this section we still consider a Chevalley group $G(R)$ of type $G_2$ (it is simultaneously elementary and adjoint) over a local ring with~$1/2$ and $1/3$.

\begin{theorem}
The normalizer of $G(R)$ in $\GL_{14}(R)$ is $\lambda\cdot G(R)$.
\end{theorem}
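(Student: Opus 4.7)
The easy inclusion $\lambda E\cdot G(R)\subseteq N:=N_{\GL_{14}(R)}(G(R))$ is immediate, so the work lies in the reverse inclusion. Given $C\in N$, my plan is to apply Theorem~\ref{first} to the automorphism $i_C$ of $G(R)$, writing it as $i_C=i_{\widetilde C}\circ\rho$ with $\widetilde C\in N$ and $\rho$ a ring automorphism of $R$, and then to prove in succession that (i)~$\rho$ is trivial and the residual matrix $\widetilde C^{-1}C$ is a central scalar, and (ii)~$\widetilde C$ itself can be taken in $\lambda E\cdot G(R)$.

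For (i), set $D=\widetilde C^{-1}C\in\GL_{14}(R)$, so that $DgD^{-1}=\rho(g)$ for every $g\in G(R)$, with $\rho$ applied entrywise on the right. Lemma~\ref{porozhd} says $G(R)$ generates $M_{14}(R)$ as a ring, so the identity extends to $DMD^{-1}=\rho(M)$ for all $M\in M_{14}(R)$. The left-hand side is $R$-linear in $M$ while entrywise $\rho$ is only $\rho$-semilinear; taking $M=rE$ forces $rE=\rho(r)E$, and hence $\rho=\mathrm{id}_R$. With $\rho$ trivial, $D$ centralises all of $M_{14}(R)$, so $D=\mu E$ for some $\mu\in R^{*}$, and therefore $C=\mu\widetilde C$.

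For (ii), rather than re-invoke Theorem~\ref{first} (which would be circular), I would retrace its proof with $\varphi=i_C$ and verify that every intermediate conjugator lies in $G(R)$ up to a central scalar. The matrix $g$ of Proposition~\ref{p1_0} lifts $\overline g\in G(R/J_R)$; because conjugation by any element of $\GL_{14}(R/J_R)$ is $R/J_R$-linear, the field-automorphism part of Steinberg's decomposition of $\overline{i_C}$ is forced to vanish, so $\overline g\in G(R/J_R)$ and lifts to $g\in G(R)$ via the surjection $G(R)\twoheadrightarrow G(R/J_R)$. The remaining basis changes ($g_1$ from Section~3 and $C_1,C_2,C_3,C_4$ from Section~4) simultaneously diagonalise commuting involutions congruent to $h_{\alpha_i}(-1)$ modulo $J_R$ and rescale weight components; each can be realised as conjugation by an element of $G(R,J_R)$ together with a central scalar, by a Hensel-style lifting valid because $R$ is local with $2$ and $3$ invertible.

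The main obstacle is this last Hensel step, particularly showing that any pair of commuting involutions of $G(R)$ reducing to $(h_{\alpha_1}(-1),h_{\alpha_2}(-1))$ modulo $J_R$ is $G(R,J_R)$-conjugate to the standard pair, and that the block-diagonal rescalings on short- and long-root weight spaces are induced by torus elements of $G(R,J_R)$ up to a central scalar. Granting that, the product of the intermediate conjugators realises $\widetilde C$ as an element of $\lambda E\cdot G(R)$, and combined with $C=\mu\widetilde C$ from (i) this yields $C\in\lambda E\cdot G(R)$.
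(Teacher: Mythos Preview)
Your step~(i) is clean and correct: Lemma~\ref{porozhd} together with the $R$-linearity of conjugation forces $\rho=\mathrm{id}_R$ and $\widetilde C^{-1}C=\mu E$. But observe that this makes step~(ii) carry the entire weight of the theorem: since $C=\mu\widetilde C$, the assertion $\widetilde C\in\lambda E\cdot G(R)$ is literally equivalent to $C\in\lambda E\cdot G(R)$. Step~(i) has told you the ring part is trivial, but it has not reduced the problem.

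The gap in step~(ii) is concrete, and the approach as written cannot be completed. Consider the matrices $C_3,C_4$ of Section~4: $C_3$ acts as a scalar $a$ simultaneously on both $v_\alpha$ and $v_{-\alpha}$ for every long root~$\alpha$, and as the identity on the Cartan block $V_1,V_2$. In the adjoint representation every torus element fixes $V_1,V_2$ and acts on $v_\alpha,v_{-\alpha}$ by \emph{inverse} scalars; hence $C_3=\lambda h$ with $h$ in the torus forces $\lambda=1$ (from the Cartan block) and then $a=a^{-1}$, i.e.\ $a=1$. Thus $C_3,C_4$ (and by the same reasoning $C_1,C_2$, which are not even diagonal on the root blocks) lie in $\lambda E\cdot G(R)$ only when they are trivial. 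Your Hensel-style claim for $g_1$, for the Weyl-group alignment of Section~3, and for Lemma~\ref{l3_3} is likewise left unproved, and you offer no mechanism by which the special hypothesis $\varphi=i_C$ would force the normalising parameters of Section~4 to vanish.

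The paper does not go through Theorem~\ref{first} at all. After reducing over the residue field (Lemma~\ref{fromAnton}, whose proof is essentially your step~(i) specialised to $k=R/J$) to the case $C\equiv E\bmod J$, it invokes Lemma~\ref{prod2}: fifteen specified matrix entries of any element of $\lambda G(R,J)$ determine its parameters $\lambda,s_1,s_2,t_1,\dots,t_6,u_1,\dots,u_6$ uniquely. Reading those fifteen positions off $C$ manufactures a candidate $D\in\lambda G(R,J)$; one then sets $C'=D^{-1}C$ and shows $C'=E$ by linearising the four conditions $C'x_{\pm\alpha_i}(1){C'}^{-1}=x_{\pm\alpha_i}(1)\cdot g'_{\pm\alpha_i}$ ($i=1,2$, $g'_{\pm\alpha_i}\in G(R,J)$) and solving the resulting linear system in the entries of $C'-E$. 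This is a direct matrix computation, not a retracing of the proof of Theorem~\ref{first}.
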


\begin{proof}
Suppose that we have some matrix $C=(c_{i,j})\in \GL_{14}(R)$ such that
$$
C\cdot G(R) \cdot C^{-1}=G(R).
$$

If $J$ is the radical of $R$, then the matrices from
$M_{14}(J)$ is the radical in the matrix ring $M_{14}(R)$, therefore
$$
C\cdot M_{14}(J)\cdot C^{-1}=M_{14}(J),
$$
consequently,
$$
C\cdot (E+M_{14}(J))\cdot C^{-1}=E+M_{14}(J),
$$
i.\,e.,
$$
C\cdot G(R,J)\cdot C^{-1}=G(R,J),
$$
since $G(R,J)=G(R)\cap (E+M_{14}(J)).$

Thus, the image $\overline C$ of the matrix~$C$
under factorization~$R$ by~$J$ gives us an
automorphism--conjugation of the Chevalley group $G(k)$, where $k=R/J$ is a residue field of~$R$.

\begin{lemma}\label{fromAnton}
If $G(k)$ is a Chevalley group of type $G_2$ over a field $k$ of characteristics $\ne 3$, then every its automorphism--conjugation  is inner.
\end{lemma}
\begin{proof}
By Theorem~30 from~\cite{Steinberg} every automorphism of a Chevalley group of type~$G_2$ over a field~$k$ of characteristics $\ne 3$ is standard, i.\,e., for this type it ia a composition of inner and ring automorphisms. Suppose that a matrix~$C$ is from normalizer of $G(k)$ in $\GL_{14}(k)$. Then $i_C$ is an automorphism of $G(k)$, so we have $i_C=i_g\circ \rho$, $g\in G(k)$, $\rho$ is a ring automorphism. Consequently, $i_{g^{-1}}i_C=i_{C'}=\rho$ and some matrix $C'\in \GL_{14}(k)$ defines a ring automorphism~$\rho$. For every root $\alpha\in \Phi$ we have $\rho(x_\alpha(1))=x_\alpha(1)$, therefore $C'x_\alpha(1)=x_\alpha(1) C'$ for all $\alpha\in \Phi$. Thus we have that $C'$ is scalar and an automorphism $i_C$ is inner.
\end{proof}  

By Lemma~\ref{fromAnton}
$$
i_{\overline C}=i_g,\quad  g\in G(k).
$$

Since over a field every element of a Chevalley group is a product of some set of unipotents $x_\alpha(t)$) and the matrix  $g$ can be decomposed into a product $x_{\alpha_{i_1}}(Y_1)\dots x_{i_{N}}(Y_{N})$, где
$Y_1,\dots, Y_{N}\in k$.

Since every element $Y_1,\dots, Y_N$
is a residue class in~$R$, we can choose
(arbitrarily) elements $y_1\in Y_1$, \dots, $y_N\in Y_N$, and the element
$$
g'=x_{\alpha_{i_1}}(y_1)\dots x_{i_N}(y_N)
$$
satisfies the conditions $g'\in G(R)$ and $\overline
{g'}=g$.

Consider the matrix $C'={g'}^{-1}\circ d^{-1}\circ C$. This matrix also normalizes the group $G(R)$, and
also $\overline {C'}=E$. Therefore, from the description of the normalizer of~$G(R)$ we come to the description of all matrices from this normalizer equivalent to the unit matrix modulo~$J$.

Therefore we can suppose that our initial matrix $C$ is equivalent to the unit modulo~$J$.

Our aim is to show that $C\in \lambda G(R)$.

Firstly we prove one technical lemma that we will need later.

\begin{lemma}\label{prod2}
 Let $X=\lambda t_{\alpha_1}(s_1)t_{\alpha_2}(s_2)x_{\alpha_1}(t_1)\dots x_{\alpha_6}(t_6)x_{-\alpha_1}(u_1)\dots x_{-\alpha_6}(u_6)\in \lambda G(R,J)$.
Then the matrix $X$ contains $15$ coefficients \emph{(}precisely described in the proof of lemma\emph{)}, uniquely defining all $\lambda$, $s_1,s_2$, $t_1,\dots,t_6$, $u_1,\dots, u_6$.
\end{lemma}

\begin{proof}
By direct calculation we obtain that in the matrix~$X$
$x_{12,12}=\frac{\lambda }{s_1^3s_2^2}$, $x_{12,10}=-\frac{\lambda u_2}{s_1^3s_2^2}$, therefore we find $u_2$;
$x_{12,8}=\frac{\lambda u_3}{s_1^3s_2^2}$, so we get  $u_3$;
similarly from $x_{12,6}=\frac{\lambda u_4}{s_1^3s_2^2}$,
$x_{12,4}=-\frac{\lambda u_5}{s_1^3s_2^2}$ и $x_{12,14}=-\frac{\lambda(u_6+u_2u_5)}{s_1^3s_2^2}$ we know $u_4,u_5,u_6$. Besides, from $x_{10,12}=\frac{\lambda t_2}{s_1^3 s_2}$ and $x_{10,10}=\frac{\lambda (1-t_2 u_2)}{s_1^3s_2}$ we find
$t_2$ and $\frac{\lambda}{s_1^3 s_2}$, and consequently we know~$s_2$.

 From $x_{10,8}=-\frac{\lambda (u_1-t_2u_3}{s_1^3 s_2}$ we find~$u_1$. From the first equation we can express $\lambda$ by $s_1$. Therefore, $\lambda$ is now known.
Now from the system of equations
\begin{align*}
x_{14,12}&=\lambda(t_2t_5+3t_3t_4+2t_6);\\
x_{4,12}&=\frac{\lambda(t_5+3t_1t_4+3t_1^2t_3-t_1^3t_2)}{s_2};\\
x_{4,6}&=\frac{-t_1+2t_1^2u_1-t_1^3u_1^2+u_4t_5+3u_4t_1t_4+3u_4t_1^2t_3-u_4t_1^3t_2)}{s_2};\\
x_{8,8}&=\frac{\lambda (1-3t_1u_1-3u_3t_3+3u_3t_1t_2)}{s_1^2s_2};\\
x_{14,6}&=\lambda(-t_3-2t_4u_1-t_5u_1^2+u_4t_2t_5+3u_4t_3t_4+2u_4t_6);\\
x_{14,8}&=\lambda(t_4+t_5u_1+u_3t_2t_5+3u_3t_3t_4+2u_3t_6),
\end{align*}
 where $s_1, t_1,t_3,t_4,t_5,t_6$ are variables, in every equation there is exactly one variable with invertible coefficient, and for all equations these variables are different, we can find all six variables.

Now we know all obtained elements of the ring. Lemma is proved.

\end{proof}

Now return to our main proof.
Recall, that we work with the matrix~$C$, equivalent to the unit matrix modulo radical, and mapping the Chevalley group into itself.

For every root $\alpha\in\Phi$ we have
\begin{equation}\label{osn_eq}
C x_{\alpha}(1)C^{-1}=x_{\alpha}(1)\cdot g_\alpha,\quad g_\alpha\in
G(R,J).
\end{equation}
Every $g_\alpha \in G(R,J)$ can be decomposed into a product
\begin{equation}\label{razl_rad}
 t_{\alpha_1}(1+a_1)
t_{\alpha_2}(1+a_2)x_{\alpha_1}(b_1)\dots
x_{\alpha_6}(b_6)x_{\alpha_{-1}}(c_1)\dots x_{-\alpha_{6}}(c_6),
\end{equation}
where $a_1,a_2,b_1,\dots,b_6,c_1,\dots, c_6\in J$ (see,
for example,~\cite{Abe1}).

Let $C=E+X=E+(x_{i,j})$. Then for every root~$\alpha\in \Phi$
we can write a matrix equation~\ref{osn_eq} with variables $x_{i,j},
a_1,a_2,b_1,\dots,b_6,c_1,\dots, c_6$, each of them is from the radical.

Let us change these equations.
We consider the matrix~$C$ and ``imagine'', that it is some matrix from Lemma~\ref{prod2} (i.\,e., it is from $\lambda G(R)$). Then by some its concrete $15$~positions we can  ``define'' all coefficients $\lambda, s_1, s_2, t_1,\dots,t_{6},u_1,\dots, u_{6}$ in the decomposition of this matrix from Lemma~\ref{prod2}. In the result we obtain a matrix $D\in \lambda G(R)$, every matrix coefficient in it is some (known) function of coefficients of~$C$.
Change now the equations~\eqref{osn_eq} to the equations
\begin{equation}\label{fol_eq}
D^{-1}C x_{\alpha}(1)C^{-1}D=x_{\alpha}(1)\cdot {g_\alpha}',\quad {g_\alpha}'\in
G(R,J).
\end{equation}
We again have matrix equations, but with variables $y_{i,j},
a_1',a_2',b_1',\dots,b_6',c_1',\dots, c_6'$, every of them still is from radical, and also every
 $y_{p,q}$ is some known function of (all) $x_{i,j}$. The matrix $D^{-1}C$ will be denoted by~$C'$.

We want to show that a solution exists only for all variables with  primes equal to zero. Some $x_{i,j}$ also will equal to zero, and other are reduced in the equations. Since the equations are very complicated we will consider the linearized system. It is sufficient to show that all variables from the linearized system (let it be the system of  $q$~variables) are members of some system from $q$ linear equations with invertible in~$R$ determinant.

In other words, from the matrix equalities we will show that all variables from them are equal to zeros.

Clear that  linearizing  the product $Y^{-1}(E+X)$ we obtain some matrix $E+(z_{i,j})$, with all positions described in Lemma~\ref{prod2} equal to zero.

To find a final form of a linearized system, we write the last one as:
\begin{multline*}
(E+Z)x_\alpha(1) =x_\alpha(1)(E+a_1T_1+a_1^2\dots)
(E+a_2T_2+a_2^2\dots)\cdot\\
\cdot(E+b_1X_{\alpha_1}+b_1^2X_{\alpha_1}^2/2+\dots)\dots
(E+c_6X_{-\alpha_6}+c_6^2X_{-\alpha_6}^2/2+\dots)(E+Z),
\end{multline*}
where $X_\alpha$ is a corresponding element of the Lie algebra in its adjoint representation,
\begin{align*}
T_1&=\diag[1,1,0,0,1,1,2,2,3,3,3,3,0,0];\\
T_2&=\diag[0,0,1,1,1,1,1,1,1,1,2,2,0,0].
\end{align*}

Finally we have
$$
Zx_{\alpha}(1)-x_{\alpha}(1)(Z+a_1T_1+a_2T_2+b_1X_{\alpha_1}+\dots+c_6X_{\alpha_6})=0.
$$
This equation can be written for every $\alpha\in
\Phi$ (naturally, with another $a_j, b_j, c_j$), and can be written only for generating roots: for $\alpha_1,
\alpha_2, -\alpha_1,  -\alpha_2$. The number of free variables is not changed.

We have four equations:
$$
\begin{cases}
Zx_{\alpha_1}(1)-x_{\alpha_1}(1)(X+a_{1,1}T_1+a_{2,1}T_2+\\
\ \ \ \ \
+b_{1,1}X_{\alpha_1}+\dots+b_{6,1}X_{\alpha_6}
+c_{1,1}X_{-\alpha_1}+\dots+c_{6,1}X_{-\alpha_6})=0;\\
Zx_{\alpha_2}(1)-x_{\alpha_2}(1)(X+a_{1,2}T_1+a_{2,2}T_2+\\
\ \ \ \ \ +b_{1,2}X_{\alpha_1}+\dots+b_{6,2}X_{\alpha_6}
+c_{1,2}X_{-\alpha_1}+c_{6,2}X_{-\alpha_6})=0;\\
Xx_{-\alpha_1}(1)-x_{-\alpha_1}(1)(X+a_{1,3}T_1+a_{2,3}T_2+\\
\ \ \ \ \
+b_{1,3}X_{\alpha_1}+\dots+b_{6,3}X_{\alpha_6}
+c_{1,3}X_{-\alpha_1}+\dots+c_{6,3}X_{-\alpha_6})=0;\\
Xx_{-\alpha_2}(1)-x_{-\alpha_2}(1)(X+a_{1,4}T_1+a_{2,4}T_2+\\
\ \ \ \ \
+b_{1,4}X_{\alpha_1}+\dots+b_{6,4}X_{\alpha_6}
+c_{1,4}X_{-\alpha_1}+\dots+c_{6,4}X_{-\alpha_6})=0.
\end{cases}
$$

 The matrix $Z=(z_{i,j})$ has zero elements on the positions  $z_{4,6}$, $z_{4,12}$, $z_{8,8}$, $z_{10,8}$, $z_{10,10}$, $z_{10,12}$, $z_{12,4}$, $z_{12,6}$,
$z_{12,8}$, $z_{12,10}$, $z_{12,12}$, $z_{12,14}$, $z_{14,6}$, $z_{14,8}$, $z_{14,12}$.

From the matrix of the second condition we have: the position $(3,6)$: $z_{3,4}=0$; the position $(3,7)$: $z_{3,9}=0$; the position $(3,11)$: $c_{5,1}=0$; the position $(10,6)$: $z_{10,4}=0$; the position $(10,7)$: $z_{10,9}=0$; the position $(10,12)$: $b_{2,1}=0$; the position $(11,6)$: $z_{11,4}=0$; the position $(11,13)$: $z_{11,1}=0$; the position $(2,6)$: $z_{2,4}=0$; the position $(2,7)$: $z_{2,9}=0$; the position $(3,8)$: $z_{3,6}=0$; the position $(3,13)$: $z_{3,1}=0$; the position $(5,6)$: $z_{5,4}=0$; the position $(6,9)$: $z_{8,9}=0$; the position $(11,8)$: $z_{11,6}=0$;  the position $(11,2)$: $z_{11,13}=0$; the position $(12,8)$: $c_{3,1}=0$; the position $(12,6)$: $c_{4,1}=0$; the position $(12,10)$: $c_{2,1}=0$; the position $(14,11)$: $c_{6,1}=0$; the position $(14,12)$: $b_{6,1}=0$; the position $(10,13)$: $z_{10,1}=0$; the position $(12,7)$: $z_{12,9}=0$; the position $(11,8)$: $z_{11,6}=0$; the position $(10,2)$: $z_{10,13}=0$; the position $(13,12)$: $z_{2,12}=0$; the position $(13,11)$: $z_{2,11}=0$; the position $(12,5)$: $z_{12,7}=0$; the position $(6,1)$: $z_{8,1}=0$; the position $(14,7)$: $z_{14,9}=0$; the position $(12,13)$: $z_{12,1}=0$; the position $(14,13)$: $z_{14,1}=0$; the position $(14,5)$: $z_{14,7}=0$; the position $(12,2)$: $z_{12,13}=0$; the position $(1,9)$: $z_{13,9}=0$; the position $(4,9)$: $z_{6,9}=0$; the position $(14,2)$: $z_{14,13}=0$; the position $(2,5)$: $z_{2,7}=0$; the position $(1,12)$: $z_{13,12}=0$; the position $(10,5)$: $z_{10,7}=0$; the position $(9,4)$: $z_{7,4}=0$.

From the matrix of the second condition it follows:  the positions $(9,6)$: $z_{9,2}=0$;  $(13,13)$: $z_{13,3}=0$; $(12,13)$: $z_{12,3}=0$; $(12,14)$: $с_{6,2}=0$; $(14,11)$: $z_{4,11}=0$; $(12,9)$: $z_{12,11}=0$; $(13,5)$: $с_{3,2}=0$; $(11,10)$: $z_{9,10}=0$; $(8,9)$: $z_{8,11}=0$; $(11,8)$: $z_{9,8}=0$; $(10,9)$: $z_{10,11}=0$; $(6,9)$: $z_{6,11}=0$; $(2,3)$: $z_{6,3}=0$; $(4,1)$: $z_{4,5}=0$; $(10,1)$: $z_{10,5}=0$; $(10,5)$: $z_{12,5}=0$; $(7,12)$: $z_{7,10}=0$; $(5,10)$: $z_{1,10}=0$; $(7,5)$: $b_{1,2}=0$; $(12,10)$: $c_{2,2}=0$; $(5,3)$: $z_{1,3}=0$; $(13,7)$: $c_{4,2}=0$; $(13,8)$: $b_{4,2}=0$;   $(10,8)$: $c_{1,2}=0$; $(12,4)$: $c_{5,2}=0$; $(7,2)$: $b_{5,2}=0$; $(1,6)$: $z_{1,2}=0$; $(4,12)$: $z_{4,10}=0$; $(5,11)$: $z_{1,11}=0$; $(13,9)$: $z_{13,11}=0$; $(4,6)$: $z_{4,2}=0$;  $(7,9)$: $z_{7,11}=0$; $(10,6)$: $z_{10,2}=0$;
$(14,8)$: $z_{4,8}=0$; $(4,13)$: $z_{4,3}=0$; $(7,13)$: $z_{7,3}=0$; $(8,13)$: $z_{8,3}=0$; $(9,13)$: $z_{9,3}=0$; $(10,13)$: $z_{10,3}=0$; $(6,1)$: $z_{6,5}=0$; $(8,1)$: $z_{8,5}=0$; $(9,1)$: $z_{9,5}=0$; $(13,1)$: $z_{13,5}=0$; $(10,2)$: $z_{12,2}=0$; $(6,4)$: $z_{6,14}=0$; $(7,4)$: $z_{7,14}=0$; $(8,4)$: $z_{8,14}=0$; $(10,4)$: $z_{10,14}=0$; $(13,4)$: $z_{13,14}=0$; $(4,5)$: $z_{14,5}=0$; $(8,6)$: $z_{8,2}=0$; $(2,7)$: $z_{6,7}=0$; $(5,7)$: $z_{1,7}=0$; $(14,7)$: $z_{4,7}=0$; $(3,7)$: $z_{13,7}=0$; $(3,8)$: $z_{13,8}=0$; $(8,8)$ и $(10,10)$: $a_{1,2}=0$ и $a_{2,2}=0$; $(1,1)$: $z_{1,5}=0$; $(2,2)$: $z_{6,2}=0$; $(3,3)$: $z_{14,3}=0$; $(4,4)$: $z_{4,14}=0$; $(9,9)$: $z_{9,11}=0$; $(10,12)$: $b_{2,2}=0$; $(3,11)$: $z_{14,11}=0$; $(2,10)$: $z_{6,10}=0$; $(2,14)$: $z_{2,3}=0$; $(2,13)$: $z_{6,13}=0$; $(14,13)$: $z_{4,13}=0$; $(14,1)$: $z_{4,1}=0$; $(14,9)$: $z_{4,9}=0$; $(14,4)$: $z_{14,14}=0$; $(3,9)$: $z_{3,11}=0$; $(3,14)$: $z_{13,13}=0$; $(3,13)$: $z_{3,3}=0$.

Again return to the first condition. Now the elements $z_{1,9}$, $z_{2,5}$, $z_{6,1}$, $z_{8,7}$, $z_{8,13}$, $z_{5,6}$, $z_{5,11}$ become to be zeros.

If we consider the condition~3, we have that from all coefficients $a_i,b_i,c_i$ only $b_{2,3},b_{3,3}, b_{4,3}, c_{1,3}, a_{1,3}$ can be not zeros. The following coefficients of~$Z$ are zeros: $z_{5,3}$, $z_{5,10}$, $z_{1,12}$, $z_{7,8}$, $z_{11,2}$, $z_{11,10}$, $z_{11,8}$, $z_{14,2}$.

Now in the second condition the position $(14,6)$ gives $b_{3,2}=0$, therefore $z_{13,2}$, $z_{6,8}$, $z_{1,14}$, $z_{7,5}$, $z_{1,13}$, $z_{8,10}$, $z_{9,6}$, $z_{9,7}$, $z_{9,12}$ are zeros.

Again from the third condition $a_{1,3}=0$, $z_{7,12}=0$.

Finally, come to the last, fourth condition. Since the roots $\alpha_2$ and $-\alpha_2$ are conjugate with the element~$w_2$, it is clear that in the fourth condition from all $a_i,b_i,c_i$ only  $b_{5,4}$ can be not zero. Therefore the following elements of~$Z$ are zeros: $z_{1,4}$, $z_{5,7}$, $z_{5,9}$, $z_{5,12}$, $z_{5,1}$, $z_{5,13}$, $z_{5,14}$, $z_{2,1}$, $z_{2,6}$, $z_{2,14}$, $z_{3,2}$, $z_{3,12}$, $z_{3,14}$, $z_{3,5}$, $z_{3,7}$, $z_{3,8}$, $z_{3,13}$, $z_{13,1}$, $z_{13,6}$, $z_{6,4}$, $z_{2,8}$, $z_{2,13}$, $z_{7,1}$, $z_{7,6}$, $z_{7,9}$, $z_{8,4}$, $z_{8,6}$, $z_{8,12}$, $z_{9,4}$, $z_{10,6}$, $z_{11,7}$, $z_{11,9}$, $z_{11,12}$, $z_{11,14}$, $z_{3,10}$, $z_{13,4}$, $z_{14,4}$, $z_{5,8}$.

From the first equation now $z_{2,10}=z_{6,12}=z_{1,6}=0$, from the second one $z_{1,8}=z_{5,2}=z_{7,2}=z_{13,10}=z_{14,10}=z_{11,3}=z_{9,13}=z_{9,14}=0$, from the third one $z_{9,1}=z_{11,5}=z_{7,13}=0$. Again from the second condition it follows  $z_{6,6}=z_{2,2}$, $z_{5,5}=z_{1,1}$, $z_{11,11}=z_{9,9}$. From the first condition  $z_{4,4}=z_{1,1}=z_{2,2}=z_{7,7}=z_{9,9}=0$. Therefore $Z=0$, what we need.

Theorem~2 is proved.
\end{proof}

From Theorems 1 and 2 it directly follows the main theorem of this paper:

\begin{theorem}\label{main}
 Suppose that $G(R)$ and $G(S)$ are Chevalley groups of type  $G_2$,  $R$, $S$ are local rings with~$1/2$ and $1/3$. Then every isomorphism between~$G(R)$ and $G(S)$ is standard, i.\,e., it is the composition of  a ring isomorphism and an inner automorphism.
\end{theorem}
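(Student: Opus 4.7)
The plan is to combine Theorem 1 and Theorem 2 directly. Given an arbitrary isomorphism $\varphi : G(R) \to G(S)$, Theorem 1 already produces a decomposition
$$
\varphi = i_C \circ \rho,
$$
where $\rho : G(R) \to G(S)$ is the isomorphism induced by a ring isomorphism $R \to S$ (also denoted $\rho$), and $i_C$ is an automorphism--conjugation of $G(S)$ induced by some matrix $C \in \GL_{14}(S)$ satisfying $C\, G(S)\, C^{-1} = G(S)$. So the remaining task is to upgrade $i_C$ from an automorphism--conjugation to a genuine inner automorphism of $G(S)$.

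For that I would simply invoke Theorem 2 applied to the ring $S$ (which is also local with $1/2$ and $1/3$, since it is isomorphic to $R$ via $\rho$ once we know $\varphi$ exists). Theorem 2 says that the normalizer of $G(S)$ in $\GL_{14}(S)$ is $\lambda \cdot G(S)$, so there exist $\lambda \in S^*$ and $g \in G(S)$ with $C = \lambda g$. Because scalar matrices are central in $\GL_{14}(S)$, conjugation by $\lambda$ is trivial, and hence $i_C = i_g$ as automorphisms of $G(S)$.

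Putting the two pieces together gives
$$
\varphi = i_g \circ \rho,
$$
which is the very definition of a standard isomorphism: a composition of a ring isomorphism and an inner automorphism. There is essentially no obstacle here, since all the work has been front-loaded into Theorem 1 (the structural reduction of an arbitrary isomorphism to a ring isomorphism followed by a conjugation by an element of the normalizer of $G(S)$ in $\GL_{14}(S)$) and Theorem 2 (the description of that normalizer as $\lambda\cdot G(S)$). The only thing worth emphasizing in the writeup is that we apply Theorem 2 over $S$ rather than $R$, and that the scalar factor $\lambda$ disappears under conjugation, so no central or graph automorphism can sneak in—consistent with the earlier remark that for the root system $G_2$ there are no nontrivial central or graph automorphisms.
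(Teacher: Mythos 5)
Your proposal is correct and follows exactly the paper's route: the paper derives Theorem~\ref{main} directly from Theorem~1 (isomorphism $=$ ring isomorphism composed with an automorphism--conjugation) and Theorem~2 (the normalizer of $G(S)$ in $\GL_{14}(S)$ is $\lambda\cdot G(S)$), with the scalar factor absorbed since it acts trivially under conjugation. Your writeup just makes explicit the details the paper leaves implicit.
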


\end{document}